\theoremstyle{thmstyleone}%
\newtheorem{theorem}{Theorem}
\newtheorem{lemma}{Lemma}
\newtheorem{assumption}{Assumption}
\theoremstyle{thmstyletwo}%
\newtheorem{remark}{Remark}%
\theoremstyle{thmstylethree}%
\newtheorem{definition}{Definition}%
\begin{document}

\title[An accelerated proximal PRS-SQP algorithm with dual ascent-descent procedures for smooth composite optimization]{An accelerated proximal PRS-SQP algorithm with dual ascent-descent procedures for smooth composite optimization}


\author[1,2]{\fnm{Jiachen} \sur{Jin}}\email{jinjiachen@nudt.edu.cn}

\author[1]{\fnm{Guodong} \sur{Ma}}\email{mgd2006@163.com}

\author*[1]{\fnm{Jinbao} \sur{Jian}}\email{jianjb@gxu.edu.cn}

\affil[1]{\orgdiv{School of Mathematical Sciences, Center for Applied Mathematics of Guangxi}, \orgname{Guangxi Minzu University}, \city{Nanning}, \postcode{530006}, \state{Guangxi}, \country{China}}

\affil[2]{\orgdiv{College of Science}, \orgname{National University of Defense Technology}, \city{Changsha}, \postcode{410073}, \state{Hunan}, \country{China}}


\abstract{
	Conventional wisdom in composite optimization suggests augmented Lagrangian dual ascent (ALDA) in Peaceman-Rachford splitting (PRS) methods for dual feasibility. However, ALDA may fail when the primal iterate is a local minimum, a stationary point, or a coordinatewise solution of the highly nonconvex augmented Lagrangian function. Splitting sequential quadratic programming (SQP) methods utilize augmented Lagrangian dual descent (ALDD) to directly minimize the primal residual, circumventing the limitations of ALDA and achieving faster convergence in smooth optimization. This paper aims to present a fairly accessible generalization of two contrasting dual updates, ALDA and ALDD, for smooth composite optimization. A key feature of our PRS-SQP algorithm is its dual ascent-descent procedure, which provides a free direction rule for the dual updates and a new insight to explain the counterintuitive convergence behavior. Furthermore, we incorporate a hybrid acceleration technique that combines inertial extrapolation and back substitution to improve convergence. Theoretically, we establish the feasibility for a wider range of acceleration factors than previously known and derive convergence rates within the Kurdyka-{\L}ojasiewicz framework. Numerical experiments validate the effectiveness and stability of the proposed method in various dual-update scenarios.}

\keywords{dual update, smooth composite optimization, SQP algorithm, hybrid acceleration, Peaceman-Rachford splitting method}



\maketitle

\section{Introduction}\label{sec1}
This paper considers the smooth composite optimization problem:
\begin{equation}\label{op}
	\min_{x}~f(x)+g(Ax),
\end{equation}
where $f: \mathbb{R}^{n_{1}} \to \mathbb{R}$ and $g: \mathbb{R}^{n_{2}} \to \mathbb{R}$ are smooth (not necessarily convex) functions, and $A: \mathbb{R}^{n_{1}} \to \mathbb{R}^{n_{2}}$ is a linear mapping. 
This model arises in machine learning and data science, where $f$ is a loss function (e.g., sigmoid loss \cite{Tibshirani96} or logistic loss \cite{RWL10}) and $g$ is a regularization term (e.g., $L_2$-norm or Huber function \cite{Huber1964}).

The sequential quadratic programming (SQP) method \cite{Wilson1963,FLT02,LMD16,AKR23,COR24} is one of the most effective methods for smooth optimization problems. To derive SQP for solving \eqref{op}, one needs to introduce an auxiliary variable $y$ which transforms the problem into:
\begin{equation}\label{P}
	\min_{x, y}\{f(x)+g(y),\ {\rm s.t.}  \ A x=y\}.
\end{equation}
At each iteration $(x_k,y_k)$, the SQP method aims to solve the quadratic programming (QP) subproblem:
\begin{equation}\label{CQP}
	\min_{x,y}\{\nabla f(x_k)^{\top}(x-x_k)+\nabla g(y_k)^{\top}(y-y_k)+\frac{1}{2}\|(x-x_k, y-y_k)\|_{H_k}^2,~ 
	{\rm s.t.}~A x=y\},
\end{equation}
where $H_k$ is a symmetric approximation of the Hessian matrix for the Lagrangian function with respect to $(x, y)$, typically required to be positive definite for convergence. 
However, the subproblem \eqref{CQP} has the same size as the original problem \eqref{P}, thus solving \eqref{CQP} remains computationally expensive, especially for large-scale applications.

To address this, inspired by the idea of splitting methods \cite{GM75,PR55}, Jian et al. \cite{JZY20,JLY21,JZY22,JMX24,JCT23} proposed a class of splitting SQP methods. 
By approximating $\nabla^2 f(x_k)$ and $\nabla^2 g(y_k)$ with symmetric matrices $H_k^x$ and $H_k^y$, respectively, \eqref{CQP} is reduced to a two-block structure solvable via alternating direction method of multipliers (ADMM) \cite{GM75} or Peaceman-Rachford splitting (PRS) method \cite{PR55}. Define the augmented Lagrangian function (ALF) of \eqref{P} as
\vspace{-0.2cm}
\begin{equation}\label{alf}
	L_{\beta}(x,y,\lambda)=f(x)+g(y)-{\lambda}^{\top}(Ax-y)+\frac{\beta}{2}\|Ax-y\|^2,
\end{equation} 
where $\beta>0$ is a penalty parameter and $\lambda$ is the dual variable (Lagrange multiplier). The iterative scheme of the splitting SQP method is
\vspace{-0.2cm}
\begin{equation}\label{ssqp}
	\left\{\begin{array}{l}
		\tilde{x}_{k+1}=\arg\min\limits_{x} \left\{\nabla_x L_{\beta}(x_k,y_k,\lambda_k)^\top (x-x_k)+\frac{1}{2}\|x-x_k\|^2_{H_x^k+\beta A^\top A}\right\},\\
		\tilde{y}_{k+1}=\arg\min\limits_{y} \left\{\nabla_y L_{\beta}(x_k,y_k,\lambda_k)^\top (y-y_k)+\frac{1}{2}\|y-y_k\|^2_{H_y^k+\beta I_{n_2}}\right\},\\
		d^x_k=\tilde{x}_{k+1}-x_k,~d^y_k=\tilde{y}_{k+1}-y_k,~d^u_k=(d^x_k,d^y_k),~u_k=(x_k,y_k),\\
		L_{\beta}(u_k+t_kd^u_k,\lambda_k)\leq L_{\beta}(u_k,\lambda_k)-\rho t_k (\|d^x_k\|^2_{H_x^k+\beta A^\top A}+\|d^y_k\|^2_{H_y^k+\beta I_{n_2}}),\\
		x_{k+1}=x_k+t_k d^x_k,~y_{k+1}=y_k+t_k d^y_k,~\lambda_{k+1}=\lambda_{k}+\xi(Ax_{k+1}-y_{k+1}),
	\end{array}\right.
\end{equation}
where $\rho,\xi>0$, and stepsize $t_k$ is determined via the line search.

A defining feature of the splitting SQP method \eqref{ssqp} is its dual update: the multiplier $\lambda$ follows the steepest descent direction $-\nabla_\lambda L_{\beta}(x_{k+1},y_{k+1},\cdot)$, which is fundamentally different from classical splitting algorithms like ADMM and PRS. 
For constrained optimization problems \eqref{P}, the PRS method \eqref{PR} employs augmented Lagrangian dual ascent (ALDA) to update the dual variable, which essentially maximizes the concave augmented Lagrangian dual function $d(\lambda):=\min_{x,y} L_{\beta}(x,y,\lambda)$ via the gradient of $-d$. However, ALDA can struggle if the gap between $L_{\beta}(x_{k+1},y_{k+1},\lambda_k)$ and $d(\lambda_k)$ is large or fails to remain uniformly bounded over iterations \cite{SS24}. Counterintuitively, splitting SQP methods achieve faster convergence through augmented Lagrangian dual descent (ALDD), which directly targets primal feasibility by reducing the residual $\|Ax_{k+1}-y_{k+1}\|=\|\nabla_\lambda L_{\beta}(x_{k+1},y_{k+1},\cdot)\|$, bypassing the limitations of ALDA.
This motivates our study, which seeks to address two key questions:
\begin{itemize}
	\item Why does such an ALDD scheme, which seemingly undermines dual feasibility, preserve convergence?
	
	\item Is it possible to reconcile the two fundamentally different approaches, ALDD and ALDA, into one procedure?
\end{itemize}
Unlike existing works \cite{JZY20,JLY21,JZY22,JMX24,JCT23,SS24} that restrict dual updates to a given direction, this paper introduces a novel PRS-SQP algorithm with dual ascent-descent procedures that is free to set the dual update direction. 
By distilling and synthesizing two contrasting dual updates---ALDA and ALDD---into one procedure, we enable a comprehensive analysis of how the direction of the dual update affects the convergence of splitting SQP methods.
The following subsections review the literature on PRS methods and ALDD methods.

\subsection{Peaceman-Rachford splitting methods}
The PRS method is pivotal in splitting methods that exploit the separable structure of \eqref{P}. It can be seen as a symmetric variant of ADMM, with the main difference that PRS updates the dual variable $\lambda$ twice per iteration by ALDA. Despite their similarities, PRS consistently outperforms ADMM in convergence when it converges \cite{GKM03}. 
The iterative framework of PRS is as follows:
\begin{subequations}\label{PR}
	\begin{numcases}{}
		x_{k+1}=\arg\min_x \{L_{\beta}(x,y_k,\lambda_k)\},\label{PRX}\\
		\lambda_{k+\frac{1}{2}}=\lambda_{k}-\beta (Ax_{k+1}-y_k),\label{PRL1}\\
		y_{k+1}=\arg\min_y \{L_{\beta}(x_{k+1},y,\lambda_{k+\frac{1}{2}})\},\label{PRY}\\
		\lambda_{k+1}=\lambda_{k+\frac{1}{2}}-\beta (Ax_{k+1}-y_{k+1}).\label{PRL2}
	\end{numcases}
\end{subequations}
PRS methods minimize the primal variable $(x, y)$ via descent and maximize the dual variable $\lambda$ via ascent.
However, even in the convex setting for \eqref{P}, He et al. \cite{HLW14} noted that the sequence generated by \eqref{PR} is not guaranteed to converge. To address this, they proposed scaling the updates of $\lambda$ in \eqref{PRL1} and \eqref{PRL2} by a factor $\alpha\in (0,1)$ to enforce strict contractiveness.
Gu et al. \cite{GJH15} extended this approach by introducing two different relaxation factors and incorporating proximal terms in the subproblems \eqref{PRX} and \eqref{PRY}. The addition of a suitable proximal term not only ensures the uniqueness of the solution, but also provides a quantifiable descent in the ALF, which facilitates convergence analysis. The resulting proximal PRS iteration is given by
\[
\begin{cases}{}
	x_{k+1}=\arg\min\limits_{x} \{L_{\beta}(x,y_k,\lambda_k)+\frac{1}{2}\|x-x_k\|_S^2\},\\
	\lambda_{k+\frac{1}{2}}=\lambda_{k}-r\beta (Ax_{k+1}-y_k),\\
	y_{k+1}=\arg\min\limits_{y} \{L_{\beta}(x_{k+1},y,\lambda_{k+\frac{1}{2}})+\frac{1}{2}\|y-y_k\|_T^2\},\\
	\lambda_{k+1}=\lambda_{k+\frac{1}{2}}-s\beta (Ax_{k+1}-y_{k+1}),
\end{cases}
\]
where $r\in [0,1)$ and $s\in (0,\frac{1-\alpha+\sqrt{(1+\alpha)^2+4(1-\alpha)^2}}{2})$ are dual stepsizes, and $S$ and $T$ are symmetric and possibly indefinite matrices defining $\|x\|_{S}^2={x^{\top}Sx}$ and $\|x\|_{T}^2={x^{\top}Tx}$. 

To further improve PRS methods, some acceleration techniques have been explored, broadly categorized into two approaches. The first integrates back substitution \cite{HTY12}, which combines the current and previous iterates to stabilize convergence \cite{DL19}. The second employs inertial extrapolation \cite{AA01}, where the algorithm extrapolates the current iterate in the direction of the last movement before applying PRS, thereby accelerating convergence \cite{DLL17,DL20}. These studies, however, focus on convex settings.
While nonconvex PRS methods have been extensively analyzed \cite{CDD16,LLP17,GHLILW22,LJH23}, their accelerated variants incorporating back substitution or inertial extrapolation for primal variables remain underexplored, with only \cite{WSL23} to our knowledge investigating this problem.

\subsection{Augmented Lagrangian dual descent methods}
In recent years, an ALDD scheme has emerged as an alternative to the ALDA \cite{JZY20,JLY21,JZY22,JCT23,JMX24,SS24}. Unlike ADMM or PRS, this novel update moves iterations in parallel but opposite directions, representing a fundamentally different approach. 
Jian et al. \cite{JZY20,JLY21,JZY22,JCT23,JMX24} pioneered the use of ALDD in the splitting SQP method \eqref{ssqp}. Assuming the dual variable is bounded, they proved that the method converges to an $\epsilon$-stationary solution with a complexity of $\mathcal{O} (\epsilon^{-2})$ \cite{JZY20,JLY21}. Inspired by the PRS method \eqref{PR}, they further introduced a hybrid dual update that combines both ALDD and ALDA, correcting the multiplier $\lambda$ twice per iteration \cite{JZY22}. The dual stepsizes in \cite{JZY22} have to satisfy $r+s<0$, so it is still an ALDD method. This framework was later extended to solve general linearly constrained optimization problems \cite{JCT23,JMX24}. In these works, the ALDD was interpreted as a mechanism to ensure the monotonicity of the iteration sequence.
Then a seminal work by Sun and Sun \cite{SS24} considered a scaled stepsize for the ALDD and incorporated a regularization term in $\lambda$ with $\omega>0$ into the ALF \eqref{alf}.
Upon obtaining $u_{k+1}:=(x_{k+1},y_{k+1})$, $\lambda_{k+1}$ is updated via a proximal gradient step,
\[\lambda_{k+1}=\arg\min_\lambda \{L_{\beta}(u_{k+1},\lambda)+\frac{\omega}{2\beta}\|\lambda\|^2+\frac{\tau\omega}{2\beta}\|\lambda-\lambda_k\|^2\}
=\frac{\tau}{1+\tau}(\lambda_k+\frac{\beta}{\tau\omega}(Ax_{k+1}-y_{k+1})),\]
where $\tau>0$. They established that the scaled dual descent ADMM achieves an $\epsilon$-stationary solution with a complexity of $\mathcal{O} (\epsilon^{-4})$. 
To date, only a handful of studies have examined dual descent methods, leaving significant gaps in understanding. This motivates our in-depth investigation of this promising yet underexplored direction.

\subsection{Contributions}
The main contributions are summarized as follows:

The algorithmic framework in this paper, called HAP-PRS-SQP (Algorithm \ref{algo1}), generalizes dual updates by allowing non-restrictive dual stepsizes $r$ and $s$ to satisfy $r+s\ne 0$, accommodating both ascent ($r>0,s>0$) and descent ($r<0,s<0$) directions. This flexibility seamlessly integrates ALDA and ALDD into one procedure.
Our analysis shows that the direction of the dual update does not substantially change the monotonicity of the ALF-based merit function (Lemma \ref{lem3}), thus ensuring convergence while accommodating both ascent and descent updates.

The proposed hybrid acceleration strategy unifies and extends back substitution \cite{HTY12} and inertial extrapolation \cite{AA01}. Unlike these techniques, our strategy is embedded between two steps, acting as a substitution for the forward step and an extrapolation for the backward step. 
Theoretically, we prove the convergence of HAP-PRS-SQP with an upper bound on the acceleration factor of $1/rho-1,\rho\in (0,0.5)$, which exceeds the previously known result of 1 in \cite{DLL17}. Furthermore, within the Kurdyka-{\L}ojasiewicz (KL) framework, we establish the convergence rate of HAP-PRS-SQP.

Through illustrative examples, we demonstrate that ALDA enhances feasibility while ALDD accelerates convergence by reducing primal residuals. Numerical results on  $L_2$ regularized binary classification problem and smooth LASSO model validate that HAP-PRS-SQP outperforms the existing splitting SQP methods and classical gradient methods.

The rest of this paper is organized as follows. Section \ref{sec2} presents some notations and preliminary results that will be used throughout this paper. Section \ref{sec3} describes the algorithm in detail. The convergence analysis can be found in Section \ref{sec4}, followed by numerical result in Section \ref{sec5}. Concluding remarks are given in Section \ref{sec6}.

\section{Preliminaries}\label{sec2}
\textbf{Notation.} we denote $\mathbb{R}$ as the real number set and $\|\cdot\|$ as for the Euclidean norm on the $n$-dimensional Euclidean space $\mathbb{R}^{n}$. $I_n\in\mathbb{R}^{n\times n}$ is an identity matrix of size $n$. $S\succ0(\succeq0)$ represents that the matrix $S$ is (semi)positive definite.
For any $x,y\in\mathbb{R}^{n},~S\in\mathbb{R}^{n\times n}$, $\|x\|_{S}^2={x^{\top}Sx}$, $\langle x,y\rangle=x^\top y$.
The minimum and maximum eigenvalues of symmetric matrix $S$ are denoted by $\lambda_{\min}(S)$ and $\lambda_{\max}(S)$, respectively. For any vector $x$, $\lambda_{\min}(S)\|x\|^{2}\leq\|x\|^{2}_{S}\leq\lambda_{\max}(S)\|x\|^{2}$.
The distance from any point $x\in \mathbb{R}^n$ to any subset $\mathcal{Q}\subseteq \mathbb{R}^n$ is $d(x,\mathcal{Q}):= \inf_{y\in \mathcal{Q}}\|y-x\|$, and if $\mathcal{Q}=\emptyset$, let $d(x,\mathcal{Q})=+\infty$.

\begin{lemma} {\rm \cite{Nesterov2018}}
For a continuous differentiable function $h: \mathbb{R}^n\to\mathbb{R}$, its gradient is Lipschitz continuous with the modulus $L>0$, then,
$$|h(y)-h(x)-\langle\nabla h(x),y-x\rangle|\leq\frac{L}{2}\|y-x\|^{2},~\forall~x,y\in\mathbb{R}^n.$$
\end{lemma}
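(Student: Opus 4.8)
The plan is to reduce the multivariate estimate to a one-dimensional computation along the line segment joining $x$ and $y$, and then combine the fundamental theorem of calculus with the Lipschitz bound on $\nabla h$. Concretely, I would fix $x,y\in\mathbb{R}^{n}$ and introduce the auxiliary function $\phi:[0,1]\to\mathbb{R}$ defined by $\phi(t)=h\bigl(x+t(y-x)\bigr)$. Since $h$ is continuously differentiable, the chain rule gives that $\phi$ is continuously differentiable with $\phi'(t)=\langle\nabla h(x+t(y-x)),\,y-x\rangle$.

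The second step is to express the left-hand quantity as an integral. Writing $h(y)-h(x)=\phi(1)-\phi(0)=\int_{0}^{1}\phi'(t)\,dt$ and noting $\langle\nabla h(x),y-x\rangle=\int_{0}^{1}\langle\nabla h(x),y-x\rangle\,dt$, I obtain
\[
h(y)-h(x)-\langle\nabla h(x),y-x\rangle=\int_{0}^{1}\bigl\langle\nabla h(x+t(y-x))-\nabla h(x),\,y-x\bigr\rangle\,dt.
\]
Taking absolute values, passing them inside the integral, and applying the Cauchy--Schwarz inequality to each integrand yields
\[
\bigl|h(y)-h(x)-\langle\nabla h(x),y-x\rangle\bigr|\le\int_{0}^{1}\bigl\|\nabla h(x+t(y-x))-\nabla h(x)\bigr\|\,\|y-x\|\,dt.
\]

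The final step invokes the hypothesis: by $L$-Lipschitz continuity of $\nabla h$, the first factor is at most $L\|t(y-x)\|=Lt\|y-x\|$, so the right-hand side is bounded by $L\|y-x\|^{2}\int_{0}^{1}t\,dt=\frac{L}{2}\|y-x\|^{2}$, which is the claimed inequality. I do not anticipate any genuine obstacle here; the only points meriting a line of care are the applicability of the chain rule and the fundamental theorem of calculus (both immediate from $h\in C^{1}$) and the legitimacy of moving the absolute value inside the integral. This is presumably why the paper states the result simply with a reference to \cite{Nesterov2018} rather than with a proof.
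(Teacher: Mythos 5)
Your argument is correct and is exactly the standard integral-form proof of the descent lemma that appears in the cited reference \cite{Nesterov2018}; the paper itself offers no proof, simply invoking that citation. Nothing further is needed.
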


The domain of a function $q$ is denoted by ${\rm dom}~q=\{x\in\mathbb{R}^n:q(x)<+\infty\}$. For a proper lower semicontinuous function $q:\ \mathbb {R}^n\to \mathbb {R}\cup \{+\infty\}$ and $-\infty<\sigma _{1}<\sigma _{2}<+\infty$, define
$[\sigma _{1}<q<\sigma _{2}]:=\{x\in\mathbb{R}^n:\sigma _{1}<q(x)<\sigma _{2}\}.$

\begin{definition}{\rm \cite{ABS13} (KL property)
For a proper lower semicontinuous function  $q:\mathbb {R}^n\to \mathbb {R}\cup \{+\infty\}$ and point $\bar{x}\in {\rm dom}~q$, if there exist a neighborhood $U$ of $\bar{x}$, a continuous concave function $\varphi:[0,\sigma )\to\mathbb{R}_{+}$ and $\sigma=\sigma(\bar{x}) \in(0,\infty]$ such that
(i) $\varphi(0) = 0$;
(ii) $\varphi$ is continuous differentiable function on $(0,\sigma )$;
(iii) $\varphi'(t)>0,~\forall~t\in (0,\sigma )$;
(iv) $\varphi'(q(x)-q(\bar{x}))d(0,\partial q(x))\geq1,~\forall~x\in U\cap[q(\bar{x})<q<q(\bar{x})+\sigma ]$,
then $q$ is said to have the KL property at $\bar{x}$ and $\varphi$ is the KL correlation function of $q$.

We denote by $\Phi_\sigma$ the class of $\varphi$ which satisfies the properties  (i)-(iii) above. If $q$ satisfies the KL property at each point of ${\rm dom}~q$, then $q$ is called a KL function.
}\end{definition}

\begin{lemma}\label{KLP}{\rm{\cite{BST14} (Uniform KL property)}} Assume that a proper lower semicontinuous function $q:\mathbb {R}^n\to \mathbb {R}\cup \{+\infty\}$ is constant on a compact set $\Omega$ and satisfies the KL property at each point of $\Omega$. Then, there exist $\epsilon>0,~\sigma>0$ and $\varphi\in\Phi_\sigma$ such that
$\varphi'(q(x)-q(\bar{x}))d(0,\partial q(x))\geq 1$ holds for any $ \bar{x}\in\Omega$  and $x\in\{x\in \mathbb {R}^n:d(x,\Omega)<\epsilon\}\cap[q(\bar{x})<q(x)<q(\bar{x})+\sigma]$.
\end{lemma}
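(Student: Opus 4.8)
The plan is to argue by a standard compactness (finite subcover) argument that patches together the pointwise desingularizing functions supplied by the KL property. Since $q$ is constant on $\Omega$, write $q\equiv\bar q$ on $\Omega$; note $\Omega\subseteq{\rm dom}~q$ because the KL property is assumed at each of its points. For each $\bar x\in\Omega$, the KL property gives $\eta_{\bar x}>0$, $\sigma_{\bar x}\in(0,\infty]$ and $\varphi_{\bar x}\in\Phi_{\sigma_{\bar x}}$ such that
\[
\varphi_{\bar x}'\big(q(x)-\bar q\big)\,d\big(0,\partial q(x)\big)\ge 1\qquad\text{for all }x\in B(\bar x,\eta_{\bar x})\cap[\bar q<q<\bar q+\sigma_{\bar x}].
\]
The balls $\{B(\bar x,\eta_{\bar x}/2)\}_{\bar x\in\Omega}$ form an open cover of the compact set $\Omega$, so I would extract a finite subcover $B(\bar x_1,\eta_1/2),\dots,B(\bar x_p,\eta_p/2)$ with $\eta_i:=\eta_{\bar x_i}$, and then set
\[
\epsilon:=\tfrac12\min_{1\le i\le p}\eta_i>0,\qquad \sigma:=\min_{1\le i\le p}\sigma_{\bar x_i}>0 .
\]

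Next I would verify that these choices suffice. Take any $\bar x\in\Omega$ and any $x$ with $d(x,\Omega)<\epsilon$ and $q(\bar x)<q(x)<q(\bar x)+\sigma$; since $q(\bar x)=\bar q$, this reads $\bar q<q(x)<\bar q+\sigma$. Choosing $z\in\Omega$ with $\|x-z\|<\epsilon$ and picking $i$ with $z\in B(\bar x_i,\eta_i/2)$, the triangle inequality gives $\|x-\bar x_i\|<\epsilon+\eta_i/2\le\eta_i$, so $x\in B(\bar x_i,\eta_i)$; moreover $q(\bar x_i)=\bar q<q(x)<\bar q+\sigma\le q(\bar x_i)+\sigma_{\bar x_i}$. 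Hence $x$ lies in the set on which the pointwise inequality for $\bar x_i$ is valid, yielding $\varphi_{\bar x_i}'\big(q(x)-\bar q\big)\,d\big(0,\partial q(x)\big)\ge 1$ (trivially so if $\partial q(x)=\emptyset$, since then $d(0,\partial q(x))=+\infty$).

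Finally I would build the uniform function by summation rather than by a maximum: put $\varphi:=\sum_{i=1}^p\varphi_{\bar x_i}$ on $[0,\sigma)$. Each summand is continuous on $[0,\sigma)\subseteq[0,\sigma_{\bar x_i})$, continuously differentiable on $(0,\sigma)$, concave, vanishes at $0$, and has positive derivative on $(0,\sigma)$; these properties are all preserved by a finite sum, so $\varphi\in\Phi_\sigma$, while $\varphi'(t)=\sum_{j=1}^p\varphi_{\bar x_j}'(t)\ge\varphi_{\bar x_i}'(t)>0$ on $(0,\sigma)$. Combined with the previous step and $q(\bar x)=\bar q$,
\[
\varphi'\big(q(x)-q(\bar x)\big)\,d\big(0,\partial q(x)\big)\ge\varphi_{\bar x_i}'\big(q(x)-q(\bar x)\big)\,d\big(0,\partial q(x)\big)\ge 1 ,
\]
which is the asserted uniform KL inequality. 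The only real point of care — the ``main obstacle'' — is the combination step: a pointwise maximum of concave functions need not be concave, so one must use the sum, and one must simultaneously shrink to the common interval $(0,\sigma)$ so that every $\varphi_{\bar x_i}$ is admissible there; both are handled by the choices of $\sigma$ and $\varphi$ above.
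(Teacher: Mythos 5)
Your argument is correct and is essentially the standard proof of this uniformization result from the cited reference [BST14] (the paper itself states the lemma without proof): cover $\Omega$ by the KL neighborhoods, extract a finite subcover, take the minimum radius and threshold, and sum the finitely many desingularizing functions to obtain a single $\varphi\in\Phi_\sigma$. Your observation that one must sum rather than take a pointwise maximum (to preserve concavity and differentiability) is exactly the right point of care, and the triangle-inequality step transferring $x$ into one of the covering balls is handled correctly.
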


Here $\partial q(x)$ is the limiting subdifferential of $q$. If $q$ is continuously differentiable, then $\partial q(x)=\nabla q(x)$. This paper use these results in the smooth case. Let $w=(x,y,\lambda)$, the ALF \eqref{alf} implies
\begin{equation}\label{F2}
\left\{\begin{array}{l}
\nabla _x L_{\beta}(w)=\nabla f(x)-A^{\top}(\lambda-\beta(A x-y)),\\
\nabla _y L_{\beta}(w)=\nabla g (y)-(\lambda-\beta(A x-y)),\\
\nabla _\lambda L_{\beta}(w)=-(Ax-y).
\end{array}\right.
\end{equation}
\begin{definition}{\rm A point $(x^*,y^*)$ is said to be a Karush-Kuhn-Tucker (KKT) point to problem \eqref{P} with multiplier $\lambda^*$, if
\[\nabla f(x^*)=A^\top \lambda^*,\ \nabla g(y^*)=\lambda^*,\ Ax^*=y^*.\]
}\end{definition}
Obviously, $(x^*,y^*)$ is a KKT point for \eqref{P} with $\lambda^*$ if and only if $\nabla_w L_{\beta}(x^*,y^*,\lambda^*)=0$, i.e., $w^*=(x^*,y^*,\lambda^*)$ is a stationary point of $L_{\beta}(w)$. We denote the set of all stationary points of $L_{\beta}$ as crit $L_{\beta}$.
Since the constraint in \eqref{P} is affine linear, a KKT point is a necessary condition for a local optimal solution to \eqref{P}. Consequently, our focus in the next section is on designing a novel algorithm to compute a KKT point for problem \eqref{P}.

\section{Algorithm design}\label{sec3}
For the $k$-th iteration $(x_k,y_k)$ in \eqref{CQP}, we choose $H_k={\rm diag}(H_k^x,H_k^y)$, where $H_k^x$ and $H_k^y$ are symmetric approximations of $\nabla^2 f(x_k)$ and $\nabla^2 g(y_k)$, respectively. This transforms the problem into a two-block QP problem:
\[\min_{x, y}\{F_k^f(x)+G_k^g(y),\ {\rm s.t.}  \ A x=y\} ,\]
where
\vspace{-0.2cm}
\[
F_k^f(x)=\nabla f(x_k)^{\top}(x-x_k)+\frac{1}{2}\|x-x_k\|_{H_k^x}^2,
G_k^g(y)=\nabla g(y_k)^{\top}(y-y_k)+\frac{1}{2}\|y-y_k\|_{H_k^y}^2.
\]
We adopt a proximal PRS method, which decomposes this problem into two small-scale QP subproblems for $x$ and $y$ that can be solved separately.

\textbf{$x$-subproblem:} For the current iterate $w_k=(x_k,y_k,\lambda_k)$, the $x$-subproblem, augmented with a proximal term, is formulated as:
\begin{align}
	\tilde{x}_{k+1}&=\arg\min_x\{F_k^f(x)-\lambda_{k}^{\top}(A x-y_{k})+\frac{\beta}{2}\|A x-y_{k}\|^2+ \frac{\ell}{2}\|x-x_k\|^2\},\nonumber\\
	&=\arg\min_x\{[\nabla f(x_{k})-A^{\top}(\lambda_{k}-\beta(A x_{k}-y_{k}))]^{\top}(x-x_{k})+\frac{1}{2}\|x-x_{k}\|_{\mathcal{H}_k^x}^{2}\},\label{xQP}
\end{align}
where $\mathcal{H}_k^x=H_k^x+\beta A^{\top}A+\ell I_{n_1}$ and $\ell>0$. If $\mathcal{H}_k^x\succ0$, the solution $\tilde{x}_{k+1}$ is a unique. From the KKT optimality conditions, we have
\begin{equation}\label{xoc}
\nabla f(x_{k})-A^{\top}(\lambda_{k}-\beta(A x_{k}-y_{k}))+\mathcal{H}_k^x(\tilde{x}_{k+1}-x_{k})=0.
\end{equation}
To accelerate convergence, we introduce a {\it hybrid acceleration step} inspired by inertial techniques \cite{AA01} and accelerated strategies \cite{GCH20}:
\begin{equation}\label{xines}
\bar{x}_{k+1}=\tilde{x}_{k+1}+\alpha(\tilde{x}_{k+1}-x_{k}),
\end{equation}
where $\alpha>-1$. This yields the search direction for $x$:
\begin{equation}\label{sd-x}
d_{k}^{x} = \bar{x}_{k+1}-x_{k}=(1+\alpha)(\tilde{x}_{k+1}-x_{k}).
\end{equation}
From \eqref{F2}, \eqref{xoc} and \eqref{sd-x}, it follows that
\begin{align}
\nabla_x L_{\beta}(w_k)^{\top}d_k^x=- \frac{1}{1+\alpha}\|d_k^x\|_{\mathcal{H}_k^x}^2
=-(1+\alpha)\|\tilde{x}_{k+1}-x_{k}\|_{\mathcal{H}_k^x}^2.
\label{xd}
\end{align}
Thus, for $\alpha>-1$, $L_{\beta}(x,y_k,\lambda_k)$ exhibits a descent property along $d_{k}^{x}$ at $x_k$. We use $L_{\beta}(x,y_k,\lambda_k)$ as the merit function and perform a line search along $d_{k}^{x}$ to generate next iterate $x_{k+1}$.

\textbf{First dual update}: Based on $(x_{k+1},y_k)$, the multiplier $\lambda_{k}$ is updated as:
\begin{equation}\label{lambda1}
\lambda_{k+\frac{1}{2}} = \lambda_{k}-r \beta(A x_{k+1}-y_{k}).
\end{equation}
where $r\in \mathbb{R}$.

\textbf{$y$-subproblem}: Similarly, for the iterate $(x_{k+1},y_k,\lambda_{k+\frac{1}{2}})$, the $y$-subproblem is
\vspace{-0.2cm}
\begin{align}
\tilde{y}_{k+1}&=\arg\min_y\{G_k^g(y)-\lambda_{k+\frac{1}{2}}^{\top}(A x_{k+1}-y)+\frac{\beta}{2}\|A x_{k+1}-y\|^2+\frac{\sigma}{2}\|y-y_{k}\|^{2}\},\nonumber\\
&=\arg\min_y\{[\nabla g (y_{k})-\lambda_{k+\frac{1}{2}}+\beta(A x_{k+1}-y_{k})]^{\top}(y-y_{k})+\frac{1}{2}\|y-y_{k}\|_{\mathcal{H}_k^y}^{2}\},\label{yQP}
\end{align}
where $\label{yQP_H}\mathcal{H}_k^y=H_k^y+(\beta+\sigma) I_{n_2}$ and $\sigma>0$. If $\mathcal{H}_k^y\succ0$, the solution $\tilde{y}_{k+1}$ is unique, and the KKT conditions yield:
\begin{equation}\label{yoc}
\nabla g (y_{k})-(\lambda_{k+\frac{1}{2}}-\beta(A x_{k+1}-y_{k}))+\mathcal{H}_k^y(\tilde{y}_{k+1}-y_{k})=0.
\end{equation}
Applying the hybrid acceleration step:
\begin{equation}\label{yines}
\bar{y}_{k+1}=\tilde{y}_{k+1}+\alpha(\tilde{y}_{k+1}-y_{k}),
\end{equation}
we obtain the search direction for $y$:
\begin{equation}\label{sd-y}
d_{k}^{y} = \bar{y}_{k+1}-y_{k}=(1+\alpha)(\tilde{y}_{k+1}-y_{k}).
\end{equation}
From \eqref{F2}, \eqref{yines} and \eqref{sd-y}, it follows that:
\[\begin{aligned}
\nabla _y L_{\beta}(x_{k+1},y_k,\lambda_{k+\frac{1}{2}})^{\top}d_k^y
= - \frac{1}{1+\alpha}\|d_k^y\|_{\mathcal{H}_k^y}^2
=-(1+\alpha)\|\tilde{y}_{k+1}-y_{k}\|_{\mathcal{H}_k^y}^2.\label{yd}
\end{aligned}\]
Thus, \eqref{yQP} provides a descent direction $d_{k}^{y}$ for $y_k$ at $(x_{k+1},y_k,\lambda_{k+\frac{1}{2}})$.
We perform a line search along $d_{k}^{y}$ to compute the next iterate $y_{k+1}$.

\textbf{Second dual update:}
Finally, the multiplier is updated a second time:
\begin{equation}\label{lambda2}
\lambda_{k+1} = \lambda_{k+\frac{1}{2}}-s \beta(A x_{k+1}-y_{k+1}),
\end{equation}
where $s\in \mathbb{R}$.
The hybrid acceleration proximal Peaceman-Rachford splitting SQP (HAP-PRS-SQP) algorithm is summarized in Algorithm \ref{algo1}.
\begin{algorithm}[]
\caption{HAP-PRS-SQP algorithm}\label{algo1}

\noindent {\bf Step 0. (Initialization)} Given an initial point $w_0=(x_0,y_0,\lambda_0)$. Select parameters $\rho,\nu\in (0,1)$, $\alpha \in (-1, \frac{1}{\rho}-1)$, penalty parameter $\beta>0$, proximal parameters $\ell,\sigma>0$, multiplier stepsizes $r$ and $s$ satisfying $r+s\not=0$. Choose positive definite matrices $H_0^x \in \mathbb{R}^{n_1\times n_1}$ and $H_0^y\in \mathbb{R}^{n_2\times n_2}$. Compute $\mathcal{H}_0^x=H_0^x+\beta A^{\top}A+\ell I_{n_1}$ and $\mathcal{H}_0^y=H_0^y+(\beta +\sigma) I_{n_2}$. Set $k:= 0$.

\noindent {\bf Step 1. (Solve the $x$-subproblem)} (i) Solve the subproblem \eqref{xQP} to obtain $\tilde{x}_{k+1}$;

(ii) Compute the accelerated iterate $\bar{x}_{k+1}$ using \eqref{xines};

(iii) Calculate the search direction $d_{k}^{x} = \bar{x}_{k+1}-x_{k}$ and determine the stepsize $t_k^x=\nu^{i_k}$, where $i_k$ is the smallest nonnegative integer $i$ satisfying:
\begin{equation}\label{xlines}
L_{\beta}(x_{k}+\nu^{i_k}d_{k}^{x}, y_{k}, \lambda_{k}) \leq L_{\beta}(x_{k}, y_{k}, \lambda_{k})-\rho \nu^{i_k}\|d_{k}^{x}\|_{\mathcal{H}_k^x}^{2}.
\end{equation}

(iv) Update the iterate:
\begin{equation}\label{newx}
x_{k+1}=x_{k}+t_{k}^{x} d_{k}^{x}.
\end{equation}

\noindent {\bf Step 2. (First dual update)} Update the multiplier $\lambda_{k}$ using \eqref{lambda1} to obtain $\lambda_{k+\frac12}$.

\noindent {\bf Step 3. (Solve the $y$-subproblem)} (i) Solve the subproblem \eqref{yQP} to obtain $\tilde{y}_{k+1}$;

(ii) Compute the accelerated iterate $\bar{y}_{k+1}$ using \eqref{yines};

(iii) Calculate the search direction $d_{k}^{y} = \bar{y}_{k+1}-y_{k}$ and determine the stepsize $t_k^y=\nu^{i_k}$, where $i_k$ is the smallest nonnegative integer $i$ satisfying:
\begin{equation}\label{ylines}
L_{\beta}(x_{k+1}, y_{k}+\nu^{i_k} d_{k}^{y}, \lambda_{k+\frac{1}{2}}) \leq L_{\beta}(x_{k+1}, y_{k}, \lambda_{k+\frac{1}{2}})- \rho \nu^{i_k}\|d_{k}^{y}\|_{\mathcal{H}_k^y}^{2}.
\end{equation}

(iv) Update the iterate:
\begin{equation}\label{newy}
y_{k+1}=y_{k}+t_{k}^{y} d_{k}^{y}.
\end{equation}

\noindent {\bf Step 4. (Second dual update)} Update the multiplier $\lambda_{k+\frac{1}{2}}$ using \eqref{lambda2} to obtain $\lambda_{k+1}$. Generate the new iteration $w_{k+1}=(x_{k+1}, y_{k+1},\lambda_{k+1})$.

\noindent {\bf Step 5. (Termination check)} If $w_{k+1}-w_k=0$, stop. Otherwise, proceed to Step 6.

\noindent {\bf Step 6. (Update Hessian approximations)} Compute matrices $H_{k+1}^x$ and $H_{k+1}^y$ as reasonable approximations of $\nabla ^2 f (x_{k+1})$ and $\nabla ^2 g (y_{k+1})$, ensuring: $\mathcal{H}_{k+1}^x:=H_{k+1}^x+\beta A^{\top}A+\ell I_{n_1}\succ 0$ and $\mathcal{H}_{k+1}^y:=H_{k+1}^y+(\beta +\sigma) I_{n_2}\succ 0$. Set $k:= k+1$ and go to Step 1.
\end{algorithm}

\begin{remark}\label{rem2}{\rm We give some remarks on Algorithm \ref{algo1}.

(i) The main computation in the HAP-PRS-SQP algorithm is the solving of subproblems \eqref{xQP} and \eqref{yQP}. These are standard QP problems and can be efficiently solved using existing solvers such as OOQP\footnote{\href{https://www.controlengineering.co.nz/Wikis/OPTI/pmwiki.php/Solvers/OOQP}{https://www.controlengineering.co.nz/Wikis/OPTI/pmwiki.php/Solvers/OOQP}}. 

(ii) The hybrid acceleration steps \eqref{xines} and \eqref{yines} unify two existing acceleration techniques. When $\alpha>0$, the hybrid acceleration reduces to an inertial extrapolation \cite{AA01} for the line search step. Especially for $\rho<0.5$, the restriction $(0, 1/\rho-1)$ is larger than the range $(0,1)$ in the literature \cite{DLL17,CCM15,PS17,GCH20,WSL23}. When $\alpha<0$, the hybrid acceleration reduces to a back substitution \cite{HTY12} for the subproblem step.

(iii) The inclusion of proximal terms in \eqref{xQP} and \eqref{yQP} ensures that the parameters $r$ and $s$ in the multiplier updates \eqref{lambda1} and \eqref{lambda2} can both be positive, enhancing the flexibility for ALDA.

(iv) The calculation of the matrices in Step 6 significantly influences both the theoretical properties and numerical performance of the HAP-PRS-SQP algorithm. For various approaches to computing these matrices, refer to \cite{JZY20,JLY21}. 
}\end{remark}

\section{Convergence analysis}\label{sec4}
This section analyzes the convergence of the HAP-PRS-SQP algorithm. Some notations and necessary assumptions are given below:
\[
\hat{w}_k=(w_k,d_{k-1}^y),~d_k=({d}_{k}^x,{d}_{k}^y),~t_k=(t_{k}^x,t_{k}^y),~y_{-1}=\bar{y}_0=\tilde{y}_0=y_0,~\mathcal{H}_{-1}^y=\mathcal{H}_0^y.
\]

\begin{assumption}\label{ass1}{\rm
		
		(i) The iteration sequence $\{w_k\}$ and matrix sequences $\{{H}_k^x\}$ and $\{{H}_k^y\}$ generated by the HAP-PRS-SQP algorithm are bounded, with $\|{H}_k^x\|\leq \eta^x$ and $\|{H}_k^y\|\leq \eta^y$ for all $k$.
		
		(ii) There exist lower bounds $\underline{\lambda}^x$ and $\underline{\lambda}^y$ of $\{\lambda_{\min}(H^x_k)\}$ and $\{\lambda_{\min}(H^y_k)\}$, respectively, such that
		\begin{equation}\label{NF01} \eta^x_1:=\underline{\lambda}^x+\beta\lambda_{\min}(A^TA)+\ell>0,\
			\eta^y_1:=\underline{\lambda}^y+\beta+\sigma>0.
		\end{equation}
		
		(iii) The gradients $\nabla f(x)$ and $\nabla g(y)$ are local Lipschitz continuous.
	}
\end{assumption}

\begin{remark}\label{remark3}{\rm (Comments on Assumption \ref{ass1})
		
		(i) The boundedness of $\{w_k\}$, together with additional suitable conditions, ensures the existence of a KKT point for \eqref{P} (see Theorem \ref{the1}). Some verifiable conditions for boundedness are discussed in \cite{LP15,WLW17,LJH23,YTJ24}. Furthermore, the boundedness of $\{H_k^x\}$ and $\{H_k^y\}$ is achievable with practical choices, such as $H_k^x=\nabla^2f(x_k)$ and $H_k^y=\nabla^2g(y_k)$. 
		
		(ii) First, for any eigenvalue $\lambda^x_k$ of $H_{k}^x$ with eigenvector $\xi^x_k$, it follows that $\lambda^x_k=[(\xi^x_k)^TH^x_k\xi^x_k]/\|\xi^x_k\|^2\geq -\|H^x_k\|\geq -\eta^x$. So both $\{\lambda_{\min}(H^x_k)\}$ and $\{\lambda_{\min}(H^y_k)\}$ possess finite lower bounds. 
		Second, in case that the minimum eigenvalues of $H_{k}^x$ and/or $H_{k}^y$ can be obtained easily, we choose $\underline{\lambda}^x$ and/or $\underline{\lambda}^y$ as $\underline{\lambda}^x=\inf_{k}\lambda_{\min}(H^x_k)$ and/or $\underline{\lambda}^y=\inf_{k}\lambda_{\min}(H^y_k)$. Otherwise, the low-cost way to choose $\underline{\lambda}^y$ or/and $\underline{\lambda}^x$ is $\underline{\lambda}^y=-\eta^y$ or/and $\underline{\lambda}^x=-\eta^x$.
		Third, for known $\underline{\lambda}^y$ and $\underline{\lambda}^x$ as well as $\beta$, condition \eqref{NF01} can be satisfied easily by selecting appropriate proximal parameters $\ell$ and $\sigma$. 
		Fourth, from the definitions of $\mathcal{H}_k^x, \ \mathcal{H}_k^y,\ \eta^x_1$ and $\eta^y_1$, it is easy to know that
		\begin{subequations}\label{F20}
			\begin{numcases}{}
				\|\mathcal{H}_k^x\|\leq \eta_2^x:=\eta^x+\beta\|A^TA\|+\ell,\ \|\mathcal{H}_k^y\|\leq \eta_2^y:=\eta^y+\beta+\sigma,\label{F20a}\\
				x^T\mathcal{H}_k^xx\geq \eta^x_1\|x\|^2,\   y^T\mathcal{H}_k^y y\geq \eta^y_1\|y\|^2,\ \forall\ x\in \mathbb{R}^{n_1},\ \forall\ y\in \mathbb{R}^{n_2},\ \forall\ k.\label{F20b}
			\end{numcases}
		\end{subequations}
		
		(iii) Unlike some references such as \cite{JZY20,JLY21,JZY22,JCT23,JMX24}, Assumption \ref{ass1} makes no positive definiteness requirements for either $(H^x_k,H^y_k)$ or $(H^x_k+\beta A^TA,H^y_k+\beta I_{n_2})$. This allows broader matrix choices, such as exact Hessians $H_{k}^x=\nabla ^2 f (x_{k})$ and $H_{k}^y=\nabla ^2 g (y_{k})$.
		
		(iv) Assumption \ref{ass1} (iii) ensures a positive lower bound for stepsize sequence $\{t_k\}$ (Lemma \ref{lem2}). This condition holds automatically if $f$ and $g$ are twice continuously differentiable.
		
}\end{remark}

We first analyze several basic characters of the HAP-PRS-SQP algorithm.
\begin{lemma}\label{lem1}
	(i) Both line searches \eqref{xlines} and \eqref{ylines} in HAP-PRS-SQP are well-defined.
	(ii) If $w_{k+1}-w_k=0$, $(x_{k+1},y_{k+1})$ is a KKT point of \eqref{P} with the multiplier $\lambda_{k+1}$.
\end{lemma}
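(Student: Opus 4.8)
The plan is to prove the two parts of Lemma \ref{lem1} separately, relying on the standard descent-along-a-direction estimates already derived in Section \ref{sec3}.

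\textbf{Part (i): well-definedness of the line searches.} I would argue that the Armijo-type conditions \eqref{xlines} and \eqref{ylines} are satisfied for all sufficiently large $i$, so the smallest nonnegative integer $i_k$ exists. Consider the $x$-line search at $(x_k,y_k,\lambda_k)$. By Assumption \ref{ass1}(iii) the gradient $\nabla f$ is locally Lipschitz, so on a bounded neighbourhood of $x_k$ the map $x\mapsto L_\beta(x,y_k,\lambda_k)$ has a Lipschitz-continuous gradient with some modulus $L_k$ (the $\beta\|Ax-y_k\|^2$ part contributes a global quadratic term $\beta A^\top A$). Applying the descent lemma (Lemma 1 in the excerpt) along the segment $x_k + t d_k^x$ gives
\[
L_\beta(x_k+t d_k^x,y_k,\lambda_k) \le L_\beta(x_k,y_k,\lambda_k) + t\,\nabla_x L_\beta(w_k)^\top d_k^x + \frac{L_k t^2}{2}\|d_k^x\|^2.
\]
Using the key identity \eqref{xd}, $\nabla_x L_\beta(w_k)^\top d_k^x = -\tfrac{1}{1+\alpha}\|d_k^x\|_{\mathcal{H}_k^x}^2$, so the right-hand side is
\[
L_\beta(x_k,y_k,\lambda_k) - t\Big(\tfrac{1}{1+\alpha}\|d_k^x\|_{\mathcal{H}_k^x}^2 - \tfrac{L_k t}{2}\|d_k^x\|^2\Big).
\]
For the Armijo test \eqref{xlines} it suffices that $\tfrac{1}{1+\alpha}\|d_k^x\|_{\mathcal{H}_k^x}^2 - \tfrac{L_k t}{2}\|d_k^x\|^2 \ge \rho t \|d_k^x\|_{\mathcal{H}_k^x}^2$, i.e. $t\big(\rho\|d_k^x\|_{\mathcal{H}_k^x}^2 + \tfrac{L_k}{2}\|d_k^x\|^2\big) \le \tfrac{1}{1+\alpha}\|d_k^x\|_{\mathcal{H}_k^x}^2$; since $\mathcal{H}_k^x\succ 0$ (so $\|d_k^x\|_{\mathcal{H}_k^x}^2>0$ whenever $d_k^x\ne 0$) this holds for all small enough $t>0$, in particular for $t=\nu^i$ with $i$ large. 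If $d_k^x=0$ both sides vanish and $i_k=0$ works. The argument for \eqref{ylines} is identical, using the analogous identity for $\nabla_y L_\beta(x_{k+1},y_k,\lambda_{k+\frac12})^\top d_k^y$ and $\mathcal{H}_k^y\succ 0$.

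\textbf{Part (ii): the fixed point is a KKT point.} Suppose $w_{k+1}=w_k$, so $x_{k+1}=x_k$, $y_{k+1}=y_k$, $\lambda_{k+1}=\lambda_k$. From $x_{k+1}=x_k$ and \eqref{newx} we get $t_k^x d_k^x = 0$, and since $t_k^x = \nu^{i_k}>0$ this forces $d_k^x=0$, hence by \eqref{sd-x} $\tilde{x}_{k+1}=x_k$. Plugging $\tilde{x}_{k+1}=x_k$ into the $x$-optimality condition \eqref{xoc} yields $\nabla f(x_k) - A^\top(\lambda_k - \beta(Ax_k-y_k)) = 0$. Next, the first dual update \eqref{lambda1} with $x_{k+1}=x_k$ gives $\lambda_{k+\frac12} = \lambda_k - r\beta(Ax_k - y_k)$. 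Similarly $y_{k+1}=y_k$ together with \eqref{newy} and $t_k^y>0$ forces $d_k^y=0$, hence $\tilde{y}_{k+1}=y_k$, and \eqref{yoc} gives $\nabla g(y_k) - (\lambda_{k+\frac12} - \beta(Ax_{k+1}-y_k)) = 0$, i.e. using $x_{k+1}=x_k$, $\nabla g(y_k) = \lambda_{k+\frac12} - \beta(Ax_k-y_k)$. The second dual update \eqref{lambda2} with $y_{k+1}=y_k$, $x_{k+1}=x_k$ gives $\lambda_{k+1} = \lambda_{k+\frac12} - s\beta(Ax_k-y_k)$. Now I use $\lambda_{k+1}=\lambda_k$: combining the two dual updates, $\lambda_k = \lambda_{k+1} = \lambda_k - (r+s)\beta(Ax_k-y_k)$, so $(r+s)\beta(Ax_k-y_k)=0$; since $\beta>0$ and $r+s\ne 0$ by the initialization in Algorithm \ref{algo1}, this forces $Ax_k = y_k$, i.e. $\nabla_\lambda L_\beta = 0$ and feasibility holds. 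Substituting $Ax_k=y_k$ back: $\lambda_{k+\frac12} = \lambda_k$ (from \eqref{lambda1}), then \eqref{xoc}/\eqref{yoc} reduce to $\nabla f(x_k) = A^\top \lambda_k$ and $\nabla g(y_k) = \lambda_k$. Together with $Ax_k = y_k$, these are exactly the KKT conditions of \eqref{P} at $(x_k,y_k)$ with multiplier $\lambda_k = \lambda_{k+1}$, which by the remark following the KKT definition is equivalent to $\nabla_w L_\beta(x_{k+1},y_{k+1},\lambda_{k+1})=0$.

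\textbf{Anticipated main obstacle.} Part (ii) is essentially bookkeeping; the only subtlety is that one must use the hypothesis $\lambda_{k+1}=\lambda_k$ (not just primal stationarity) to extract feasibility $Ax_k=y_k$ via the condition $r+s\ne 0$ — this is precisely where the nonstandard dual-stepsize restriction enters, and it is worth stating explicitly. The more technical point is Part (i): one must be careful that "locally Lipschitz" gradients yield a genuine Armijo acceptance, which requires confining attention to a bounded neighbourhood of the current iterate (the segment $\{x_k + \nu^i d_k^x : i\ge 0\}$ is contained in the bounded set $\{x_k + t d_k^x : t\in[0,1]\}$, so a local Lipschitz constant $L_k$ for that segment suffices). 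I do not expect either part to require machinery beyond Lemma 1 and the identities \eqref{xd} and its $y$-analogue already established in the text.
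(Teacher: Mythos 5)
Your Part (ii) is essentially identical to the paper's argument: positivity of the stepsizes forces $d_k=0$, hence $\tilde{x}_{k+1}=x_k$ and $\tilde{y}_{k+1}=y_k$; combining the two dual updates and invoking $r+s\neq 0$, $\beta>0$ extracts $Ax_k=y_k$ and $\lambda_{k+\frac12}=\lambda_k$; back-substitution into \eqref{xoc} and \eqref{yoc} then yields the KKT system. You are right that the feasibility step is where the restriction $r+s\neq 0$ earns its keep, and the paper uses it in exactly the same way. Part (i), however, takes a genuinely different route. The paper argues by contradiction: assuming the Armijo test \eqref{xlines} fails for every $i$, it divides by $\nu^{i}$, lets $i\to\infty$ to recover the directional derivative, and obtains $-\frac{1}{1+\alpha}\|d_{k}^x\|^2_{\mathcal{H}_{k}^x}\geq -\rho\|d_{k}^x\|^2_{\mathcal{H}_{k}^x}$, i.e. $\rho\geq\frac{1}{1+\alpha}$, contradicting $\alpha<\frac{1}{\rho}-1$. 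That argument needs only differentiability of $L_\beta(\cdot,y_k,\lambda_k)$, not a Lipschitz gradient, and produces no quantitative information. Your descent-lemma argument requires the local Lipschitz hypothesis of Assumption \ref{ass1}(iii) restricted to the segment $\{x_k+td_k^x: t\in[0,1]\}$, but in exchange it is constructive and yields an explicit threshold below which every $t$ is accepted; this is precisely the mechanism the paper defers to Lemma \ref{lem2} to obtain the uniform lower bound $t_k\geq\gamma$. So your proof of (i) is in effect a merger of the paper's Lemma \ref{lem1}(i) and Lemma \ref{lem2}; both are valid, and the paper's version is marginally more economical in hypotheses at this stage.

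One algebraic slip to fix in Part (i): after the descent lemma the Armijo test \eqref{xlines} requires
$t\bigl(\tfrac{1}{1+\alpha}\|d_k^x\|_{\mathcal{H}_k^x}^2-\tfrac{L_k t}{2}\|d_k^x\|^2\bigr)\geq \rho t\|d_k^x\|_{\mathcal{H}_k^x}^2$, which upon dividing by $t>0$ is
$\tfrac{1}{1+\alpha}\|d_k^x\|_{\mathcal{H}_k^x}^2-\tfrac{L_k t}{2}\|d_k^x\|^2\geq \rho\|d_k^x\|_{\mathcal{H}_k^x}^2$; your stated sufficient condition carries a spurious extra factor of $t$ on the right-hand side and, as written, only delivers the Armijo inequality with $\rho t^2$ in place of $\rho t$. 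The correct condition is in fact easier to satisfy for small $t$ (it needs only $t\leq \frac{2(\frac{1}{1+\alpha}-\rho)\eta_1^x}{L_k}$, using $\|d_k^x\|^2_{\mathcal{H}_k^x}\geq\eta_1^x\|d_k^x\|^2$ and $\frac{1}{1+\alpha}>\rho$), so the conclusion and the overall structure of your argument survive unchanged.
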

\begin{proof}
	(i) If $d_k^x=0$ and $d_k^y=0$, then stepsizes $t_k^x=1$ and $t_k^y=1$ trivially satisfy the line search conditions \eqref{xlines} and \eqref{ylines}. Otherwise, suppose there exists a nonnegative integer $k_0$ and $d_{k_0}^x\ne 0$ such that
	\[
	L_{\beta}(x_{k_0}+\nu^{i_{k_0}}d_{k_0}^{x}, y_{k_0}, \lambda_{k_0}) > L_{\beta}(x_{k_0}, y_{k_0}, \lambda_{k_0})-\rho \nu^{i_{k_0}}\|d_{k_0}^{x}\|_{\mathcal{H}_{k_0}^x}^{2}.
	\]
	This implies
	\[
	\frac{L_{\beta}(x_{k_0}+\nu^{i_{k_0}}d_{k_0}^{x}, y_{k_0}, \lambda_{k_0})-L_{\beta}(x_{k_0}, y_{k_0}, \lambda_{k_0})}{\nu^{i_{k_0}}} >-\rho \|d_{k_0}^{x}\|_{\mathcal{H}_{k_0}^x}^{2}.
	\]
	Taking $i_{k_0}\to \infty$, then
	\[
	\nabla _x L_{\beta}(w_{k_0})^{\top}d_{k_0}^x \geq -\rho \|d_{k_0}^{x}\|_{\mathcal{H}_{k_0}^x}^{2}.
	\]
	From \eqref{xd}, $d_{k_0}^x\ne 0$, $\mathcal{H}_{k_0}^x\succ0$ and $\ell>0$, we have
	\[
	- \frac{1}{1+\alpha}\|d_{k_0}^{x}\|_{\mathcal{H}_{k_0}^x}^{2}
	=\nabla _x L_{\beta}(w_{k_0})^{\top}d_{k_0}^x \geq -\rho \|d_{k_0}^{x}\|_{\mathcal{H}_{k_0}^x}^{2}.
	\]
	This yields $\rho\geq \frac{1}{1+\alpha}$, which contradicts $\alpha\in [0, \frac{1}{\rho}-1)$. Thus, the line search in Step 1 (iii) is well-defined. A similar argument applies to the line search in Step 3 (iii).
	
	(ii) If $w_{k+1}-w_k=0$, then from \eqref{newx}, \eqref{newy} and $t_k>0$, it follows that $d_k=0$. By the definitions of $d_k^x$ and $d_k^y$, we have $x_{k+1}=x_{k}=\bar{x}_{k+1}$ and $y_{k+1}=y_{k}=\bar{y}_{k+1}$.
	From \eqref{sd-x} and \eqref{sd-y}, we further deduce $\tilde{x}_{k+1}=x_k$ and $\tilde{y}_{k+1}=y_k$.
	Using the multiplier updates \eqref{lambda1} and \eqref{lambda2}, we obtain
	\[
	\lambda_{k+1} = \lambda_{k+\frac{1}{2}}-s \beta(A x_{k+1}-y_{k+1})=\lambda_{k}-(r+s) \beta(A x_{k+1}-y_{k+1})+r\beta(y_{k}-y_{k+1}).
	\]
	Since $r+s\not=0$ and $\beta>0$, t follows that $Ax_k=y_k$ and $\lambda_{k+\frac{1}{2}}=\lambda_{k}$. Combining these with \eqref{xoc} and \eqref{yoc}, we conclude
	\[
	\nabla f(x_k)=A^{\top}\lambda_k,~\nabla g(y_k)=\lambda_k,~Ax_k=y_k.
	\]
	Thus $w_k$ is a stationary point of \eqref{P}.
\end{proof}

\begin{lemma}\label{lemNA}
	If Assumption \ref{ass1} (i,ii) holds, then the sequences $\{d_k\}$, $\{(\tilde{x}_k,\tilde{y}_k)\}$ and $\{(\bar{x}_k,\bar{y}_k)\}$ generated by HAP-PRS-SQP are all bounded, and so is the sequence $\{\hat{w}_k\}$.
\end{lemma}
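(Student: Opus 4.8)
The plan is to bootstrap from the assumed boundedness of the main iterate sequence $\{w_k\}=\{(x_k,y_k,\lambda_k)\}$ to that of all the auxiliary quantities, by reading off closed-form expressions for $\tilde x_{k+1}-x_k$ and $\tilde y_{k+1}-y_k$ from the subproblem optimality conditions \eqref{xoc} and \eqref{yoc}, and controlling the inverses of $\mathcal H_k^x$ and $\mathcal H_k^y$ uniformly via the coercivity estimate \eqref{F20b} supplied by Assumption \ref{ass1}(ii). Since $f$ and $g$ are smooth, $\nabla f$ and $\nabla g$ are continuous, hence bounded on a compact set containing the bounded sequences $\{x_k\}$ and $\{y_k\}$; this provides the uniform bounds on $\{\nabla f(x_k)\}$ and $\{\nabla g(y_k)\}$ that the argument needs.

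First I would treat the $x$-block. Rewriting \eqref{xoc} gives $\tilde x_{k+1}-x_k=-(\mathcal H_k^x)^{-1}\big(\nabla f(x_k)-A^\top(\lambda_k-\beta(Ax_k-y_k))\big)$, while \eqref{F20b} yields $\mathcal H_k^x\succeq\eta^x_1 I_{n_1}$ with $\eta^x_1>0$, so $\|(\mathcal H_k^x)^{-1}\|\le 1/\eta^x_1$ for all $k$. Because $\{x_k\},\{y_k\},\{\lambda_k\}$ and $\{\nabla f(x_k)\}$ are bounded, $\{\tilde x_{k+1}-x_k\}$ is bounded, hence so is $\{\tilde x_{k+1}\}$; then by \eqref{sd-x} and \eqref{xines} the quantities $d_k^x=(1+\alpha)(\tilde x_{k+1}-x_k)$ and $\bar x_{k+1}=x_k+d_k^x$ are bounded as well. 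Next, $x_{k+1}=x_k+t_k^x d_k^x$ with $t_k^x\in(0,1]$ (equivalently, $\{x_{k+1}\}$ is merely a reindexing of $\{x_k\}$), so \eqref{lambda1} shows $\{\lambda_{k+\frac{1}{2}}\}$ is bounded. Repeating the argument on the $y$-block, \eqref{yoc} gives $\tilde y_{k+1}-y_k=-(\mathcal H_k^y)^{-1}\big(\nabla g(y_k)-\lambda_{k+\frac{1}{2}}+\beta(Ax_{k+1}-y_k)\big)$, and $\|(\mathcal H_k^y)^{-1}\|\le 1/\eta^y_1$ yields boundedness of $\{\tilde y_{k+1}\}$, hence of $d_k^y=(1+\alpha)(\tilde y_{k+1}-y_k)$ and $\bar y_{k+1}=y_k+d_k^y$; the conventions $y_{-1}=\bar y_0=\tilde y_0=y_0$ (so $d_{-1}^y=0$) cover the initial index. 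Combining the two blocks shows that $\{d_k\}=\{(d_k^x,d_k^y)\}$, $\{(\tilde x_k,\tilde y_k)\}$ and $\{(\bar x_k,\bar y_k)\}$ are bounded, and since $\hat w_k=(w_k,d_{k-1}^y)$ is assembled from the bounded sequences $\{w_k\}$ and $\{d_k^y\}$, $\{\hat w_k\}$ is bounded.

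There is no substantive obstacle here; the only points requiring attention are that the bounds $\|(\mathcal H_k^x)^{-1}\|\le 1/\eta^x_1$ and $\|(\mathcal H_k^y)^{-1}\|\le 1/\eta^y_1$ must be uniform in $k$ — which is precisely the content of \eqref{NF01} — and that one must first propagate the boundedness of $\{x_{k+1}\}$ into $\{\lambda_{k+\frac{1}{2}}\}$ before attacking the $y$-subproblem, so that the $x$-block, the first dual update, and the $y$-block are processed in that order.
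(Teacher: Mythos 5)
Your argument is correct and follows essentially the same route as the paper: both proofs bound $\|\tilde{x}_{k+1}-x_k\|$ and $\|\tilde{y}_{k+1}-y_k\|$ uniformly by combining the boundedness of $\{w_k\}$ (hence of the linearized gradient terms) with the uniform coercivity $\eta_1^x,\eta_1^y>0$ from \eqref{F20b}, and then propagate to $d_k$, $\bar{x}_{k+1}$, $\bar{y}_{k+1}$ and $\hat{w}_k$ in the same order (x-block, first dual update, y-block). The only cosmetic difference is that you extract the bound from the first-order condition \eqref{xoc} via $\|(\mathcal{H}_k^x)^{-1}\|\le 1/\eta_1^x$, whereas the paper compares objective values of the subproblem at $\tilde{x}_{k+1}$ and $x_k$ to get $\|\tilde{x}_{k+1}-x_k\|\le 2G/\eta_1^x$; both are valid.
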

\begin{proof}
	If $x_k$ is feasible and $\tilde{x}_{k+1}$ is optimal for \eqref{xQP}, then the optimality  implies
	\[
	0\geq[\nabla f(x_{k})-A^{\top}(\lambda_{k}-\beta(A x_{k}-y_{k}))]^{\top}(\tilde{x}_{k+1}-x_{k})+\frac{1}{2}\|\tilde{x}_{k+1}-x_{k}\|_{\mathcal{H}_k^x}^{2}.
	\]
	Since $\{w_k\}$ is bounded and $f(\cdot)$ is smooth, there exists a constant $G > 0$ such that
	\[\|\nabla f(x_{k})-A^{\top}(\lambda_{k}-\beta(A x_{k}-y_{k}))\|\leq G.\]
	Combining this with \eqref{sd-x} and \eqref{F20b}, we obtain
	\[\|d_{k}^{x}\| = \|\bar{x}_{k+1}-x_{k}\|=(1+\alpha)\|\tilde{x}_{k+1}-x_{k}\|\leq (1+\alpha)2G/\eta^x_1.\]
	This shows that $\{(d^x_k,\bar{x}_{k+1},\tilde{x}_{k+1})\}$ is bounded. A similar argument establishes the boundedness of $\{(d^y_k,\bar{y}_{k+1},\tilde{y}_{k+1})\}$. Consequently, the sequence $\{\hat{w}_k\}$ is bounded.
\end{proof}

From Lemma \ref{lemNA}, one knows that there exist two compact sets $\mathcal{S}^x$ and $\mathcal{S}^y$ in $ \mathbb{R}^{n_1}$ and $ \mathbb{R}^{n_2}$, respectively, such that
\begin{equation}\label{NF1}
	x_k+td^x_k\in \mathcal{S}^x, \   y_k+td^y_k\in \mathcal{S}^y,\ \forall\ t\in [0,1],\ \forall\ k.
\end{equation}
On the other hand, in view of the local Lipschitz continuity of $\nabla f(x)$ and $\nabla g(y)$, by Theorem 2.1.6 in \cite{CMN19}, we can conclude that $\nabla f(x)$ and $\nabla g(y)$ are Lipschitz continuous on $\mathcal{S}^x$ and $\mathcal{S}^y$, respectively. Namely, there exist two constants $L_f,L_g>0$ such that
\begin{equation}\label{NF5}
	\|\nabla f(x)-\nabla f(\tilde{x})\|\leq L_f\|x-\tilde{x}\|,\forall\ x,\tilde{x}\in \mathcal{S}^x;
	\|\nabla g(y)-\nabla g(\tilde{y})\|\leq L_g\|y-\tilde{y}\|,\forall\ y,\tilde{y}\in \mathcal{S}^y.
\end{equation}

The following lemma shows the infimum of the stepsize sequence $\{t_k\}$.
\begin{lemma}\label{lem2}
	If Assumption \ref{ass1}  holds, then the sequence $\{t_k\}$ yielded by the HAP-PRS-SQP algorithm possess a positive infimum independent of $k$ as follows:
	\[\label{stepL}
	\min \{t_k^x,t_k^y\} \geq \nu\min \left\{1,\frac{(\frac{1}{1+\alpha}-\rho)\eta_1^{x}}
	{L_f+\beta\|A^{\top}A\|},\frac{(\frac{1}{1+\alpha}-\rho)\eta_1 ^y}{L_g+\beta}\right\}:=\gamma.
	\]
\end{lemma}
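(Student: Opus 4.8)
The plan is to bound below each step size $t_k^x$ and $t_k^y$ by showing that a line-search acceptance inequality like \eqref{xlines} (resp. \eqref{ylines}) must hold as soon as the trial step $\nu^i$ is small enough, and then to track exactly how small that threshold needs to be in terms of the Lipschitz constants $L_f, L_g$, the penalty $\beta$, the curvature bounds $\eta_1^x, \eta_1^y$, and the acceleration parameter $\alpha$. First I would fix $k$ and consider the $x$-step. Using the descent-lemma-type estimate (Lemma~1 from the preliminaries) applied to $L_\beta(\cdot, y_k, \lambda_k)$ as a function of $x$: this function has gradient $\nabla_x L_\beta$, whose Lipschitz modulus on the compact set $\mathcal{S}^x$ (guaranteed by \eqref{NF1} and \eqref{NF5}) is at most $L_f + \beta\|A^\top A\|$, since $\nabla_x L_\beta(x,y_k,\lambda_k) = \nabla f(x) - A^\top\lambda_k + \beta A^\top(Ax - y_k)$. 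Hence for any $t\in[0,1]$,
\[
L_\beta(x_k + t d_k^x, y_k, \lambda_k) \le L_\beta(x_k, y_k, \lambda_k) + t\,\nabla_x L_\beta(w_k)^\top d_k^x + \frac{t^2(L_f + \beta\|A^\top A\|)}{2}\|d_k^x\|^2.
\]
Then I substitute the key identity \eqref{xd}, namely $\nabla_x L_\beta(w_k)^\top d_k^x = -\frac{1}{1+\alpha}\|d_k^x\|_{\mathcal{H}_k^x}^2$, and use $\|d_k^x\|^2 \le \|d_k^x\|_{\mathcal{H}_k^x}^2/\eta_1^x$ from \eqref{F20b}. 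This gives
\[
L_\beta(x_k + t d_k^x, y_k, \lambda_k) \le L_\beta(x_k, y_k, \lambda_k) - t\Big(\frac{1}{1+\alpha} - \frac{t(L_f + \beta\|A^\top A\|)}{2\eta_1^x}\Big)\|d_k^x\|_{\mathcal{H}_k^x}^2.
\]

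Comparing with the required descent $-\rho t \|d_k^x\|_{\mathcal{H}_k^x}^2$, I see that the Armijo condition \eqref{xlines} is satisfied for any $t$ with $\frac{1}{1+\alpha} - \frac{t(L_f + \beta\|A^\top A\|)}{2\eta_1^x} \ge \rho$, i.e. for all $t \le \frac{2(\frac{1}{1+\alpha}-\rho)\eta_1^x}{L_f+\beta\|A^\top A\|}$. Since the backtracking selects the largest $\nu^i \le 1$ that works, and since $t_k^x$ is either $1$ (if already $1$ works) or at least $\nu$ times the first value that fails the "too large" test, the standard backtracking argument yields $t_k^x \ge \nu\min\{1, \frac{2(\frac{1}{1+\alpha}-\rho)\eta_1^x}{L_f+\beta\|A^\top A\|}\}$; actually the factor of $2$ makes the bound even a bit larger, so the stated bound without the $2$ holds a fortiori. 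I would run the symmetric argument for the $y$-step, where $\nabla_y L_\beta(x_{k+1}, y, \lambda_{k+1/2}) = \nabla g(y) + \lambda_{k+1/2} - \beta(Ax_{k+1}-y)$ has Lipschitz modulus $\le L_g + \beta$ on $\mathcal{S}^y$, using \eqref{yd} and the second half of \eqref{F20b}, to obtain $t_k^y \ge \nu\min\{1, \frac{(\frac{1}{1+\alpha}-\rho)\eta_1^y}{L_g+\beta}\}$. Taking the minimum of the two lower bounds gives exactly $\gamma$.

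The only genuinely delicate point — though it is conceptual rather than computational — is making sure the compact set $\mathcal{S}^x$ (hence the Lipschitz constant $L_f$) can be fixed once and for all independently of $k$. This is precisely what Lemma~\ref{lemNA} and the resulting containment \eqref{NF1} deliver: boundedness of $\{w_k\}$ and of $\{d_k\}$ confines the whole line-search segment $\{x_k + t d_k^x : t\in[0,1], k\ge 0\}$ to one compact set, on which local Lipschitz continuity of $\nabla f$ (Assumption~\ref{ass1}(iii)) upgrades to global Lipschitz continuity with a uniform constant $L_f$ via \eqref{NF5}. One should also note that $\frac{1}{1+\alpha} - \rho > 0$, which is exactly the condition $\alpha < \frac1\rho - 1$ from Step~0 (together with $\alpha > -1$, so $1+\alpha>0$), guaranteeing the bound $\gamma$ is strictly positive. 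Everything else is the routine descent-lemma bookkeeping sketched above.
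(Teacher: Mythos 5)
Your argument is correct and follows essentially the same route as the paper: bound the decrease of $L_\beta$ along $d_k^x$ (resp.\ $d_k^y$) via the Lipschitz continuity of the partial gradient on the compact sets from \eqref{NF1}--\eqref{NF5}, combine with the directional-derivative identity \eqref{xd} and the eigenvalue bound \eqref{F20b}, and finish with the standard backtracking estimate. The only difference is that the paper uses the mean value theorem (yielding a $t^2$ term without the $\tfrac12$ and hence exactly the stated threshold), whereas your descent-lemma version produces the threshold with an extra factor of $2$, from which the stated bound follows a fortiori as you note.
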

\begin{proof} From \eqref{F2}, \eqref{NF1} and \eqref{NF5}, it follows that for all $t$, $\theta \in[0,1]$ and for all $k$,
	\[\left\{
	\begin{array}{l}
		\|\nabla_{x} L_{\beta}(x_{k}+\theta t d_{k}^{x}, y_{k}, \lambda_{k})-\nabla_{x } L_{\beta}(x_{k}, y_{k}, \lambda_{k})\| \leq(L_{f}+\beta\|A^{\top} A\|) t\|d_{k}^{x}\|,\\
		\|\nabla_{y} L_{\beta}(x_{k+1}, y_{k}+\theta t d_{k}^{y}, \lambda_{k+\frac{1}{2}})-\nabla_{y} L_{\beta}(x_{k+1}, y_{k}, \lambda_{k+\frac{1}{2}})\| \leq(L_{g}+\beta)t\|d_{k}^{y}\|.
	\end{array}
	\right.\]
	By the mean value theorem, the above inequalities, \eqref{xd} and \eqref{F20b}, for all $k \geq 0$ and any $t \in\left[0,\min\left\{1,\frac{(\frac{1}{1+\alpha}-\rho)\eta_1^{x}}{L_f+\beta\|A^{\top}A\|}\right\}\right]$, there exists $\theta_k^t \in (0,1)$ such that
	\[\begin{aligned}
		&L_{\beta}(x_k+td_k^x,y_k,\lambda_k)-L_{\beta}(x_k,y_k,\lambda_k)+\rho t\|d_k^x\|_{\mathcal{H}_k^x}^2\\
		=&t \nabla_{x} L_{\beta}(x_{k}+\theta_{k}^{t} t d_{k}^{x}, y_{k}, \lambda_{k})^{\top} d_{k}^{x}+\rho t\|d_{k}^{x}\|_{\mathcal{H}_{k}^{x}}^{2}\\
		=&t[\nabla_{x} L_{\beta}(x_{k}+\theta_{k}^{t} t d_{k}^{x}, y_{k}, \lambda_{k})-\nabla_{x} L_{\beta}(x_{k}, y_{k}, \lambda_{k})]^{\top} d_{k}^{x}+t \nabla_{x} L_{\beta}(x_{k}, y_{k}, \lambda_{k})^{\top} d_{k}^{x}
		+\rho t\|d_{k}^{x}\|_{\mathcal{H}_{k}^{x}}^{2}\\
		\leq&(L_{f}+\beta\|A^{\top} A\|) t^{2}\|d_{k}^{x}\|^{2}-\frac{t}{1+\alpha}\|d_{k}^{x}\|_{\mathcal{H}_{k}^{x}}^{2}
		+\rho t\|d_{k}^{x}\|_{\mathcal{H}_{k}^{x}}^{2} \\
		\leq &t[(L_{f}+\beta\|A^{\top} A\|) t-(\frac{1}{1+\alpha}-\rho)\eta_1^{x}]\|d_{k}^{x}\|^{2} \\
		\leq & 0.
	\end{aligned}\]
	Thus, it follows from the line search \eqref{xlines} that $t_k^x \geq \nu\min\left\{1,\frac{(\frac{1}{1+\alpha}-\rho)\eta_1^{x}}{L_f+\beta\|A^{\top}A\|}\right\}$. Similarly, we obtain $t_k^y \geq \nu\min \left\{1,\frac{(\frac{1}{1+\alpha}-\rho)\eta_1 ^y}{L_g+\beta}\right\}$. This completes the proof.
\end{proof}

Next, we present a lemma that plays a key role in the theoretical analysis of HAP-PRS-SQP. The lemma also elucidates why a break in the direction of the dual update does not fundamentally affect the theoretical properties of the unified framework.
Let $\hat{w}=(w,d^y)$. A new merit function $\hat{L}_{\beta}$ is considered as follows:
\[\hat{L}_{\beta}(\hat{w}):={L}_{\beta}({w})+\frac{6}{|r+s|\beta}\left(L_g^2+\beta ^2+\left(\frac{\eta_2^y}{1+\theta}\right)^2\right)\|d^y\|^2.\]

\begin{lemma}\label{lem3}
	Suppose that Assumption \ref{ass1} holds. Then
	\[\hat{L}_{\beta}(\hat{w}_{k+1})-\hat{L}_{\beta}(\hat{w}_{k}) \leq -\delta^x\|d_{k}^x\|^2-\delta^y\|d_{k}^y\|^2,\]
	where $\delta^y =\rho\gamma\eta^y_1 -\frac{6}{|r+s|\beta}\left(L_{g}^{2}+(1+s^{2})\beta^{2}+2\left(\frac{\eta_2^y}{1+\alpha}\right)^2\right)-\frac{|rs|\beta}{|r+s|}$ and $\delta^x=\rho\gamma\eta^x_1-\frac{6(1-s)^2\beta\lambda_{\max}(A^{\top}A)}{|r+s|}$.
\end{lemma}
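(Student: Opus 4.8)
The goal is a sufficient-decrease inequality for the augmented merit function $\hat L_\beta$. The natural decomposition is to track the change of $L_\beta(w)$ across the three substeps of one iteration — the $x$-line search, the first dual update, the $y$-line search, the second dual update — and then absorb the dual-update terms using the extra $\|d^y\|^2$ term built into $\hat L_\beta$. First I would split
\[
L_\beta(w_{k+1})-L_\beta(w_k)
= \bigl[L_\beta(x_{k+1},y_k,\lambda_k)-L_\beta(x_k,y_k,\lambda_k)\bigr]
+\bigl[L_\beta(x_{k+1},y_k,\lambda_{k+\frac12})-L_\beta(x_{k+1},y_k,\lambda_k)\bigr]
\]
\[
+\bigl[L_\beta(x_{k+1},y_{k+1},\lambda_{k+\frac12})-L_\beta(x_{k+1},y_k,\lambda_{k+\frac12})\bigr]
+\bigl[L_\beta(x_{k+1},y_{k+1},\lambda_{k+1})-L_\beta(x_{k+1},y_{k+1},\lambda_{k+\frac12})\bigr].
\]
The first and third brackets are controlled by the line-search conditions \eqref{xlines}, \eqref{ylines} together with Lemma \ref{lem2}: each is at most $-\rho\gamma\eta^x_1\|d^x_k\|^2$ and $-\rho\gamma\eta^y_1\|d^y_k\|^2$ respectively, using \eqref{F20b} and $t^x_k,t^y_k\ge\gamma$. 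The second and fourth brackets are the dual-ascent/descent terms; from the form of $L_\beta$ in \eqref{alf}, $L_\beta(x,y,\lambda')-L_\beta(x,y,\lambda)=-(\lambda'-\lambda)^\top(Ax-y)$, so with \eqref{lambda1}, \eqref{lambda2} these equal $r\beta\|Ax_{k+1}-y_k\|^2$ and $s\beta\|Ax_{k+1}-y_{k+1}\|^2$ — the sign of which depends on the sign of $r$ and $s$, which is exactly the ascent-vs-descent dichotomy the lemma is meant to neutralize.

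**The crux.** The key trick, which is what makes $|r+s|$ rather than $r+s$ appear, is to express the residual $Ax_{k+1}-y_{k+1}$ in terms of the multiplier increment. From \eqref{lambda1}–\eqref{lambda2}, $\lambda_{k+1}-\lambda_k = -(r+s)\beta(Ax_{k+1}-y_{k+1}) + r\beta(y_{k+1}-y_k)$, hence $Ax_{k+1}-y_{k+1} = -\tfrac{1}{(r+s)\beta}(\lambda_{k+1}-\lambda_k) + \tfrac{r}{r+s}(y_{k+1}-y_k)$, and $\|\lambda_{k+1}-\lambda_k\|$ must be bounded by the primal movement. This is where Assumption \ref{ass1} and the optimality conditions \eqref{xoc}, \eqref{yoc} enter: I would use \eqref{yoc} to write $\lambda_{k+\frac12}$ as $\nabla g(y_k) + \mathcal H^y_k(\tilde y_{k+1}-y_k) - \beta(Ax_{k+1}-y_k)$, and similarly at the previous index, then subtract; the Lipschitz bound \eqref{NF5} on $\nabla g$ over $\mathcal S^y$, the bound $\|\mathcal H^y_k\|\le\eta^y_2$ from \eqref{F20a}, and the relation $\tilde y_{k+1}-y_k = \tfrac{1}{1+\alpha}d^y_k$ convert everything into multiples of $\|d^y_k\|$, $\|d^y_{k-1}\|$ and $\|Ax_{k+1}-y_k\|$, with the last reabsorbed via $Ax_{k+1}-y_k = (Ax_{k+1}-y_{k+1}) + (y_{k+1}-y_k)$ and $y_{k+1}-y_k = d^y_k$. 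Squaring, using $\|a+b+c+d\|^2\le 4(\|a\|^2+\|b\|^2+\|c\|^2+\|d\|^2)$ or a similar Cauchy–Schwarz split (this is the source of the factor $6$ and the squared constants $L_g^2,\beta^2,(\eta^y_2/(1+\alpha))^2$), and Young's inequality on the cross term $\tfrac{r}{r+s}(y_{k+1}-y_k)$ against the $\tfrac{1}{(r+s)\beta}(\lambda_{k+1}-\lambda_k)$ term (producing the $|rs|\beta/|r+s|$ contribution) gives a bound of the shape
\[
r\beta\|Ax_{k+1}-y_k\|^2 + s\beta\|Ax_{k+1}-y_{k+1}\|^2
\;\le\; c_x\|d^x_k\|^2 + c^{(1)}_y\|d^y_k\|^2 + c^{(2)}_y\|d^y_{k-1}\|^2,
\]
with $c_x = \tfrac{6(1-s)^2\beta\lambda_{\max}(A^\top A)}{|r+s|}$ and the $\|d^y_k\|$, $\|d^y_{k-1}\|$ coefficients matching the constants in $\delta^y$ and in the penalty weight of $\hat L_\beta$; here I would also need the $x$-optimality \eqref{xoc} to see that the $\|d^x_k\|$-type contribution (through $Ax_{k+1}-y_k$ depending on $x_{k+1}$ and $\lambda_k$) is as stated — actually the cleanest route is to note $Ax_{k+1}-y_k$ involves no $d^x$ at all beyond $x_{k+1}-x_k = t^x_k d^x_k$, so the $(1-s)^2\lambda_{\max}(A^\top A)$ factor comes from $\|A(x_{k+1}-x_k)\|^2 \le \lambda_{\max}(A^\top A)\|d^x_k\|^2$.

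**Telescoping the extra term.** Finally I would add the definition of $\hat L_\beta$: $\hat L_\beta(\hat w_{k+1})-\hat L_\beta(\hat w_k) = [L_\beta(w_{k+1})-L_\beta(w_k)] + \tfrac{6}{|r+s|\beta}(L_g^2+\beta^2+(\eta^y_2/(1+\alpha))^2)(\|d^y_k\|^2-\|d^y_{k-1}\|^2)$. The $-\|d^y_{k-1}\|^2$ coming from the merit-function term must cancel the $+c^{(2)}_y\|d^y_{k-1}\|^2$ from the dual-term estimate — this forces the precise choice of the coefficient $\tfrac{6}{|r+s|\beta}(L_g^2+\beta^2+(\eta^y_2/(1+\alpha))^2)$, which is exactly why that constant appears in $\hat L_\beta$. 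Collecting the $\|d^x_k\|^2$ coefficients gives $-\rho\gamma\eta^x_1 + c_x = -\delta^x$, and the $\|d^y_k\|^2$ coefficients give $-\rho\gamma\eta^y_1 + c^{(1)}_y + \tfrac{6}{|r+s|\beta}(L_g^2+\beta^2+(\eta^y_2/(1+\alpha))^2) = -\delta^y$ after matching with the stated $\delta^y$.

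**Main obstacle.** The bookkeeping in bounding $\|\lambda_{k+1}-\lambda_k\|^2$ (equivalently $\|Ax_{k+1}-y_{k+1}\|^2$) is the delicate part: one must carefully decide which terms to throw into the $4$- or $6$-way Cauchy–Schwarz split so that the resulting constants land exactly on $L_g^2$, $(1+s^2)\beta^2$, $2(\eta^y_2/(1+\alpha))^2$ and $|rs|\beta/|r+s|$, and so that the $\|d^y_{k-1}\|^2$ coefficient produced matches the telescoping weight in $\hat L_\beta$ — getting a looser constant there would break the cancellation and the lemma as stated. Everything else (line-search decrease, eigenvalue bounds, the identity $L_\beta(x,y,\lambda')-L_\beta(x,y,\lambda)=-(\lambda'-\lambda)^\top(Ax-y)$) is routine given the earlier results.
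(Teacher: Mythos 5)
Your plan follows the paper's proof essentially step for step: the same four-way telescoping of $L_{\beta}$ across the substeps, the same line-search bounds combined with Lemma \ref{lem2} and \eqref{F20b}, the same representation of $Ax_{k+1}-y_k$ and $Ax_{k+1}-y_{k+1}$ through the multiplier increment $\lambda_{k+1}-\lambda_k$, the same six-term bound on that increment via the $y$-optimality condition \eqref{yoc} at consecutive iterations (whence the factor $6$), and the same cancellation of the $\|d_{k-1}^y\|^2$ contribution against the extra weight built into $\hat{L}_{\beta}$. The only cosmetic deviations are a couple of immaterial sign slips and your appeal to Young's inequality for the cross term in the bound \eqref{sr} --- in fact that cross term cancels identically, which is exactly why the constants land on $\frac{1}{|r+s|\beta}$ and $\frac{|rs|\beta}{|r+s|}$ as stated.
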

\begin{proof} From the definition of the ALF \eqref{alf}, and the multiplier updates \eqref{lambda1} and \eqref{lambda2}, we have
	\[
	L_{\beta}(w_{k+1})-L_{\beta}(x_{k+1}, y_{k+1}, \lambda_{k+\frac{1}{2}})=s \beta\|A x_{k+1}-y_{k+1}\|^{2},\]
	\[L_{\beta}(x_{k+1}, y_{k}, \lambda_{k+\frac{1}{2}})-L_{\beta}(x_{k+1}, y_{k}, \lambda_{k})=r \beta\|A x_{k+1}-y_{k}\|^{2}.\]
	From \eqref{F20b}, and the line search conditions \eqref{xlines} and \eqref{ylines}, we obtain
	\[
	L_{\beta}(x_{k+1}, y_{k}, \lambda_{k})-L_{\beta}(x_{k}, y_{k}, \lambda_{k})\leq-\rho t_k^x\eta^x_1 \|d_{k}^{x}\|^{2}, 
	\]
	\[
	L_{\beta}(x_{k+1}, y_{k+1}, \lambda_{k+\frac{1}{2}})-L_{\beta}(x_{k+1}, y_{k}, \lambda_{k+\frac{1}{2}}) \leq- \rho t_k^y\eta^y_1 \|d_{k}^{y}\|^{2}.
	\]
	Combining these four inequalities, we derive:
	\begin{align}
		{L}_{\beta}({w}_{k+1})-{L}_{\beta}({w}_{k}) &\leq -\rho t_k^x\eta^x_1 \|d_{k}^{x}\|^{2}-\rho t_k^y\eta^y_1 \|d_{k}^{y}\|^{2}\nonumber\\
		&\ \ \ \ +s \beta\|A x_{k+1}-y_{k+1}\|^{2}+r \beta\|A x_{k+1}-y_{k}\|^{2}.\label{meritL}
	\end{align}
	From \eqref{lambda1} and \eqref{lambda2}, the multiplier update can be expressed as
	\[\lambda_{k+1}=\lambda_k-(r+s)\beta(Ax_{k+1}-y_k)-s\beta(y_k-y_{k+1}).\]
	This implies
	\begin{align}
		A x_{k+1}-y_{k}&=-\frac{1}{(r+s) \beta}(\lambda_{k+1}-\lambda_{k})-\frac{s}{r+s}(y_{k}-y_{k+1}), \nonumber\\
		A x_{k+1}-y_{k+1}&=-\frac{1}{(r+s) \beta}(\lambda_{k+1}-\lambda_{k})+\frac{r}{r+s}(y_{k}-y_{k+1}). \label{lam2}
	\end{align}
	Using these relations, we bound the terms involving $A x_{k+1}-y_{k}$ and $A x_{k+1}-y_{k+1}$,
	\begin{align}
		&s \beta\|A x_{k+1}-y_{k+1}\|^{2}+r \beta\|A x_{k+1}-y_{k}\|^{2}\nonumber\\
		\leq&\frac{1}{|r+s|\beta}\|\lambda_{k+1}-\lambda_{k}\|^2+\frac{|rs|\beta}{|r+s|}\|y_{k+1}-y_{k}\|^2.\label{sr}
	\end{align}
	From \eqref{yoc} and \eqref{lambda2}, we have
	\begin{equation}\label{lambdak1}
		\lambda_{k+1}=\nabla g (y_{k})+(1-s)\beta(A x_{k+1}-y_{k+1})+\beta(y_{k+1}-y_k)+\mathcal{H}_k^y (\tilde{y}_{k+1}-y_{k}).
	\end{equation}
	Using \eqref{sd-y}, \eqref{F20a}, \eqref{NF1} and \eqref{NF5}, we bounds $\|\lambda_{k+1}-\lambda_{k}\|$ as
	\begin{align}
		\|\lambda_{k+1}-\lambda_{k}\|
		&= \| \nabla g(y_{k})-\nabla g(y_{k-1})+(1-s)\beta A(x_{k+1}-x_{k})-s\beta(y_{k}-y_{k+1})\nonumber\\
		&\ \ \ \ -\beta(y_k-y_{k-1})+\mathcal{H}_k^y (\tilde{y}_{k+1}-y_{k})-\mathcal{H}_{k-1}^y (\tilde{y}_{k}-y_{k-1})
		\| \nonumber\\
		&\leq L_g\|y_{k}-y_{k-1}\|+|1-s|\beta\|A(x_{k+1}-x_{k})\|+|s|\beta\|y_{k+1}-y_{k}\|\nonumber\\
		&\ \ \ \ +\beta\|y_k -y_{k-1}\| +\frac{\eta_2^y}{1+\alpha}\|d_k^y\|+\frac{\eta_2^y}{1+\alpha}\| d_{k-1}^y\|. \label{lk1lk}
	\end{align}
	Applying the Cauchy inequality and using \eqref{newx}, \eqref{newy} and $t_k\leq 1$, we further obtain
	\begin{align}
		\frac{1}{6}\|\lambda_{k+1}-\lambda_{k}\|^{2}
		&\leq(L_{g}^{2}+\beta^2)\|y_{k}-y_{k-1}\|^{2}+(1-s)^{2}\beta^{2}\|A(x_{k+1}-x_{k})\|^{2}\nonumber\\
		&\ \ \ \ +s^{2}\beta^{2}\|y_{k+1}-y_{k}\|^{2}  +\left(\frac{\eta_2^y}{1+\alpha}\right)^2\|d_k^y\|^2+\left(\frac{\eta_2^y}{1+\alpha}\right)^2\|d_{k-1}^y\|^2 \nonumber\\
		&\leq(L_{g}^{2}+\beta^2)\|d_{k-1}^y\|^{2}+(1-s)^{2}\beta^{2}\lambda_{\max}(A^{\top}A)\|d_{k}^x\|^{2}\nonumber\\
		&\ \ \ \ + s^{2}\beta^{2}\|d_{k}^y\|^{2}+\left(\frac{\eta_2^y}{1+\alpha}\right)^2\|d_k^y\|^2+\left(\frac{\eta_2^y}{1+\alpha}\right)^2\|d_{k-1}^y\|^2. \label{LK1LK}
	\end{align}
	Substituting \eqref{LK1LK} into \eqref{sr} and using $t_k^y\leq 1$, we derive
	\begin{align}
		&\ \ \ \ s \beta\|A x_{k+1}-y_{k+1}\|^{2}+r \beta\|A x_{k+1}-y_{k}\|^{2} \nonumber\\
		&\leq \frac{6}{|r+s|\beta}\left[(L_{g}^{2}+\beta^2)\|d_{k-1}^y\|^{2}
		+(1-s)^{2}\beta^{2}\lambda_{\max}(A^{\top}A)\|d_{k}^x\|^{2}+s^{2}\beta^{2} \|d_{k}^y\|^{2}\right. \nonumber\\
		&\ \ \ \ +\left.\left(\frac{\eta_2^y }{1+\alpha}\right)^2\|d_k^y\|^2
		+\left(\frac{\eta_2^y }{1+\alpha}\right)^2\|d_{k-1}^y\|^2\right]
		+\frac{|rs|\beta }{|r+s|}\|y_{k+1}-y_{k}\|^2 \nonumber\\
		&\leq \frac{6}{|r+s|\beta}\left[\left(L_{g}^{2}+\beta^2+
		\left(\frac{\eta_2^y }{1+\alpha}\right)^2\right)\|d_{k-1}^y\|^{2}
		+(1-s)^{2}\beta^{2}\lambda_{\max}(A^{\top}A)\|d_{k}^x\|^{2}\right] \nonumber\\
		&\ \ \ \ +\left[\frac{6}{|r+s|\beta}\left(s^{2}\beta^{2}
		+\left(\frac{\eta_2^y }{1+\alpha}\right)^2\right)
		+\frac{|rs|\beta}{|r+s|}\right]\|d_{k}^y\|^2. \label{srf}
	\end{align}
	Substituting \eqref{srf} into \eqref{meritL}, we obtain
	\[\begin{aligned}
		&{L}_{\beta}({w}_{k+1})-{L}_{\beta}({w}_{k}) \nonumber\\
		\leq & -\rho t_k^x\eta^x_1\|d_{k}^{x}\|^{2}-\rho t_k^y\eta^y_1\|d_{k}^{y}\|^{2}+\left[\frac{6}{|r+s|\beta }\left(s^{2}\beta^{2}
		+\left(\frac{\eta_2^y}{1+\alpha}\right)^2\right)+\frac{|rs|\beta}{|r+s|}\right]\|d_{k}^y\|^2\nonumber\\
		& +\frac{6}{|r+s|\beta}\left[\left(L_{g}^{2}+\beta^2
		+\left(\frac{\eta_2^y}{1+\alpha}\right)^2\right)\|d_{k-1}^y\|^{2}
		+(1-s)^{2}\beta^{2}\lambda_{\max}(A^{\top}A)\|d_{k}^x\|^{2}\right].\label{lw1lw}
	\end{aligned}\]
	Finally, using Lemma \ref{lem2} $(t_k\geq\gamma)$, this completes the proof.
\end{proof}

By lemma \ref{lem3}, the sequence ${\hat{L}_{\beta}(\hat{w}_{k})}$ has a sufficient descent property provided that $\delta^x > 0$ and $\delta^y > 0$. These conditions can be satisfied by appropriately choosing the parameters $(r, s, \beta, \ell, \sigma)$. {\it In this case, the dual update direction does not significantly change the monotonicity of the underlying function $\hat{L}_{\beta}$, thus ensuring the convergence of the sequence ${\hat{L}_{\beta}(\hat{w}_k)}$, as detailed in Theorem \ref{the1} (i).}

\begin{assumption}\label{ass3}{\rm Suppose that the parameters in the HAP-PRS-SQP algorithm are chosen such that
$\delta^x> 0$ and $\delta^y> 0$.
}\end{assumption}
As a general comment on Assumption \ref{ass3}, we specify two settings to verify the dual update in HAP-PRS-SQP can be ascent or descent. Here Constants are not optimized to allow for flexibility in the precise definition of parameters.

$\bullet$ Choosing $s<0,\beta>0,\sigma>-\frac{s\beta}{\rho\gamma}-\underline{\lambda}^y-\beta$ and
\[
r= -\frac{6(L_{g}^{2}+2\beta^{2}+2(\frac{\eta_2^y}{1+\alpha})^2)}
{\beta(\rho\gamma(\underline{\lambda}^y+\beta+\sigma)+s\beta)}<0,~\ell>-\frac{6(1-s)^2\beta\lambda_{\max}(A^{\top}A)}{\rho\gamma(r+s)}-\underline{\lambda}^x-\beta\lambda_{\min}(A^TA),
\]
the HAP-PRS-SQP takes the ALDD.

$\bullet$ Choosing $s=1,\beta>0,\ell>-\underline{\lambda}^x-\beta\lambda_{\min}(A^TA),\sigma>\frac{\beta}{\rho\gamma}-\underline{\lambda}^y-\beta$ and
\[
r= \frac{6(L_{g}^{2}+2\beta^{2}+2(\frac{\eta_2^y}{1+\alpha})^2)}
{\beta(\rho\gamma(\underline{\lambda}^y+\beta+\sigma)-\beta)}>0,
\]
the HAP-PRS-SQP takes the classical ALDA.

Now, we establish the subsequential convergence in Theorem \ref{the1} and the whole sequential convergence in Theorem \ref{the2} for HAP-PRS-SQP.
\begin{theorem}\label{the1}(Global convergence) Let $\Omega$ and $\hat{\Omega}$ denote the cluster point set of sequences $\{w_{k}\}$ and $\{\hat{w}_{k}\}$, respectively. Suppose that Assumptions \ref{ass1} and \ref{ass3} hold. Then the five claims for the HAP-PRS-SQP algorithm as follows hold true.

(i) The whole sequence $\{\hat{L}_{\beta}(\hat{w}_k)\}$ is convergent, and $\hat{L}_{\beta}(\hat{w}_*)=\lim_{k \to \infty }\hat{L}_{\beta}(\hat{w}_k)=\inf_{k}\hat{L}_{\beta}(\hat{w}_k)$, $\forall~\hat{w}_* \in \hat{\Omega}$. Therefore, $\hat{L}_{\beta}(\cdot)$ is finite and constant on $\hat{\Omega}$.

(ii) $ \lim_{k\to\infty}\|d_k^x\|=\lim_{k\to\infty}\|d_k^y\|=0,~\sum_{k=0}^{+\infty}\| w_{k+1}-w_k \|^2<+\infty$.

(iii) $\Omega$ and $\hat{\Omega}$ are nonempty compact sets, and  $d(w_k, \Omega) \to 0$ and $d(\hat{w}_k, \hat{\Omega}) \to 0$ as $k \to \infty$.

(iv) $\Omega \subseteq {\rm crit}~L_{\beta}$, so the HAP-PRS-SQP algorithm is global convergent.

(v) $\hat{\Omega} = \{(x_*,y_*,\lambda_*,0):~(x_*,y_*,\lambda_*) \in \Omega\}=\Omega\times {0}$.
\end{theorem}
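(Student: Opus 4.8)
The plan is to combine the sufficient-decrease inequality of Lemma \ref{lem3} with the boundedness from Lemma \ref{lemNA}, and then to pass to the limit in the optimality conditions \eqref{xoc}--\eqref{yoc} and the multiplier identities \eqref{lam2}.

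\emph{Claims (i) and (ii).} Under Assumption \ref{ass3} the constants $\delta^x,\delta^y$ in Lemma \ref{lem3} are positive, so $\{\hat{L}_{\beta}(\hat{w}_k)\}$ is nonincreasing. Since $\{\hat{w}_k\}$ is bounded by Lemma \ref{lemNA} and $\hat{L}_{\beta}$ is continuous (because $f,g$ are smooth and the added term is continuous in $d^y$), $\{\hat{L}_{\beta}(\hat{w}_k)\}$ is bounded below, hence convergent, with limit equal to $\inf_k\hat{L}_{\beta}(\hat{w}_k)$. For $\hat{w}_*\in\hat{\Omega}$, continuity of $\hat{L}_{\beta}$ along a subsequence converging to $\hat{w}_*$ gives $\hat{L}_{\beta}(\hat{w}_*)=\lim_k\hat{L}_{\beta}(\hat{w}_k)$, which is (i). Telescoping Lemma \ref{lem3} over $k=0,\dots,N$ yields $\sum_{k=0}^{N}(\delta^x\|d_k^x\|^2+\delta^y\|d_k^y\|^2)\leq \hat{L}_{\beta}(\hat{w}_0)-\inf_k\hat{L}_{\beta}(\hat{w}_k)<+\infty$; letting $N\to\infty$ gives $\sum_k\|d_k^x\|^2<\infty$ and $\sum_k\|d_k^y\|^2<\infty$, so $\|d_k^x\|,\|d_k^y\|\to0$. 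By \eqref{newx}, \eqref{newy} and $t_k\leq1$ we have $\|x_{k+1}-x_k\|\leq\|d_k^x\|$ and $\|y_{k+1}-y_k\|\leq\|d_k^y\|$, while the estimate \eqref{LK1LK} obtained in the proof of Lemma \ref{lem3} bounds $\|\lambda_{k+1}-\lambda_k\|^2$ by a fixed multiple of $\|d_k^x\|^2+\|d_k^y\|^2+\|d_{k-1}^y\|^2$; summing these three bounds over $k$ gives $\sum_k\|w_{k+1}-w_k\|^2<+\infty$, which is (ii).

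\emph{Claims (iii) and (v).} Boundedness of $\{w_k\}$ and $\{\hat{w}_k\}$ (Assumption \ref{ass1}(i), Lemma \ref{lemNA}) makes $\Omega,\hat{\Omega}$ nonempty; cluster-point sets are always closed and, being bounded here, are compact. If $d(w_k,\Omega)\not\to0$, a subsequence bounded away from $\Omega$ would, by boundedness, admit a further subsequence converging to a point of $\Omega$, a contradiction; the same argument gives $d(\hat{w}_k,\hat{\Omega})\to0$. For (v), since $\hat{w}_k=(w_k,d_{k-1}^y)$ and $\|d_{k-1}^y\|\to0$ by (ii), every cluster point of $\{\hat{w}_k\}$ has last block $0$ and $w$-block a cluster point of $\{w_k\}$; conversely any $w_{k_j}\to w_*$ yields $\hat{w}_{k_j}\to(w_*,0)$. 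Hence $\hat{\Omega}=\Omega\times\{0\}$.

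\emph{Claim (iv) and the main obstacle.} Fix $w_*=(x_*,y_*,\lambda_*)\in\Omega$ with $w_{k_j}\to w_*$. From $\|d_k^x\|,\|d_k^y\|\to0$ and \eqref{sd-x}--\eqref{sd-y} we get $\tilde{x}_{k+1}-x_k\to0$, $\tilde{y}_{k+1}-y_k\to0$; from (ii), $x_{k+1}-x_k\to0$, $y_{k+1}-y_k\to0$, $\lambda_{k+1}-\lambda_k\to0$; then \eqref{lam2} (which is where $r+s\neq0$ is essential) together with $Ax_{k+1}-y_k=(Ax_{k+1}-y_{k+1})+(y_{k+1}-y_k)$ and \eqref{lambda1} give $Ax_{k+1}-y_{k+1}\to0$, $Ax_{k+1}-y_k\to0$, $Ax_k-y_k\to0$ and $\lambda_{k+1/2}-\lambda_k\to0$. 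Taking limits along $k_j$: the feasibility residual gives $Ax_*=y_*$; passing to the limit in \eqref{xoc}, using continuity of $\nabla f$, the bound $\|\mathcal{H}_k^x\|\leq\eta_2^x$ (so $\mathcal{H}_{k_j}^x(\tilde{x}_{k_j+1}-x_{k_j})\to0$) and $Ax_k-y_k\to0$, gives $\nabla f(x_*)=A^\top\lambda_*$; similarly \eqref{yoc} with $\lambda_{k_j+1/2}\to\lambda_*$ gives $\nabla g(y_*)=\lambda_*$. Thus $w_*\in\mathrm{crit}~L_{\beta}$. The only delicate point is (iv): one must carefully track the index shifts among $k$, $k+1$, $k+\frac{1}{2}$ and verify that every residual and proximal cross-term vanishes in the limit, the crux being to establish $Ax_{k+1}-y_{k+1}\to0$ from the two facts $\lambda_{k+1}-\lambda_k\to0$ and $y_{k+1}-y_k\to0$ via \eqref{lam2}.
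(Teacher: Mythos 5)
Your proposal is correct and follows essentially the same route as the paper: sufficient decrease from Lemma \ref{lem3} plus boundedness from Lemma \ref{lemNA} for (i)--(iii) and (v), and passage to the limit in \eqref{xoc}, \eqref{yoc} and the dual updates for (iv). The only (harmless) variation is in how feasibility is obtained: the paper takes subsequential limits in \eqref{lambda1}--\eqref{lambda2} and adds the two resulting identities to get $(r+s)\beta(Ax_*-y_*)=0$, whereas you read $Ax_{k+1}-y_{k+1}\to 0$ directly off \eqref{lam2} from $\lambda_{k+1}-\lambda_k\to0$ and $y_{k+1}-y_k\to0$; both hinge on $r+s\neq0$ and yield the same conclusion.
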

\begin{proof}
(i) By Lemma \ref{lemNA}, the sequence $\{\hat{w}_{k}\}$ is bounded, and so is $\{\hat{L}_{\beta}(\hat{w}_{k})\}$. Combined with the monotonic descent of $\{\hat{L}_{\beta}(\hat{w}_{k})\}$ (Lemma \ref{lem3}), it follows that $\{\hat{L}_{\beta}(\hat{w}_{k})\}$ is convergent. For any $\hat{w}^*\in \hat{\Omega}$, there exists a subsequence $\{\hat{w}_{k_j}\}$ such that $\hat{w}_{k_j} \to \hat{w}_*$, $k_j\to\infty$. Thus,
$\hat{L}_{\beta}(\hat{w}_*)=\lim_{k_j \to \infty }\hat{L}_{\beta}(\hat{w}_{k_j})=\lim_{k \to \infty }\hat{L}_{\beta}(\hat{w}_{k})=\inf_{k}\hat{L}_{\beta}(\hat{w}_k)$.

(ii) From Lemma \ref{lem3}, we have
\[	\delta^x\|d_{k}^x\|^2+\delta^y\|d_{k}^y\|^2\leq\hat{L}_{\beta}(\hat{w}_{k}) -\hat{L}_{\beta}(\hat{w}_{k+1}).\]
Summing this inequality for $k=0,1, \dots, p$ and noticing that $\hat{L}_{\beta}(\hat {w}_{0})<+\infty$, we obtain
\[	
\sum_{k=0}^p(\delta^x\|d_{k}^x\|^2+\delta^y\|d_{k}^y\|^2)\leq\hat{L}_{\beta}(\hat{w}_{0}) -\hat{L}_{\beta}(\hat{w}_{p+1})
\leq\hat{L}_{\beta}(\hat{w}_{0}) -\hat{L}_{\beta}(\hat{w}_{*})<+\infty.
\]
Since $\delta^x>0$ and $\delta^y>0$, it follows that
\[\sum_{k=0}^{+\infty} \|d_k^x\|^2<+\infty,~\sum_{k=0}^{+\infty} \|d_k^y\|^2<+\infty.\]
Using \eqref{LK1LK}, we further deduce
\[
\sum_{k=0}^{+\infty} \|{\lambda}_{k+1}-\lambda_{k}\|^2<+\infty;\ \lim_{k\to\infty}\|d_k^x\|=\lim_{k\to\infty}\|d_k^y\|=0.
\]
Considering that
\begin{equation}\label{xleqd}
\|x_{k+1}-x_k\|\leq \frac{1}{t_k^x}\|{x}_{k+1}-x_{k}\|=\|d_k^x\|,\ \|y_{k+1}-y_k\|\leq \frac{1}{t_k^y}\|{y}_{k+1}-y_{k}\|=\|d_k^y\|.
\end{equation}
Thus,
\[\sum_{k=0}^{+\infty}\|x_{k+1}-x_k\|^2<+\infty,~\sum_{k=0}^{+\infty}\|y_{k+1}-y_k\|^2<+\infty.\]
Consequently, $\sum_{k=0}^{+\infty}\|w_{k+1}-w_k\|^2<+\infty$.

(iii) The boundedness of $\{w_{k}\}$ and $\{\hat{w}_{k}\}$, along with the definitions of $\Omega$ and $\hat{\Omega}$, implies the claim directly.

(iv) For any given $w_*=(x_*,y_*,\lambda_*)\in\Omega$, there exists a subsequence $\{w_{k_j}\}$ such that $w_{k_j} \to w_*$, $k_j \to \infty$. Since $\|w_{k+1}-w_k\|\to 0$ as $k\to\infty$, it follows that $w_{k_j+1} \to w_*$, $k_j \to \infty$. From \eqref{lambda1}, $\{\lambda_{{k_j}+\frac{1}{2}}\}$ is convergent.
Let $\lambda_{{k_j}+\frac{1}{2}} \to \lambda_{**}$ as $k_j\to \infty$. Taking the limit $k={k_j}\to \infty$ in \eqref{lambda1} and \eqref{lambda2}, we obtain
\[\lambda_{**}=\lambda_{*}-r\beta(Ax_*-y_*),~\lambda_{*}=\lambda_{**}-s\beta(Ax_*-y_*).\]
Since $r+s\not=0$ and $\beta>0$, it follows that $Ax_*=y_*$ and $\lambda_{**}=\lambda_{*}$.
From \eqref{sd-x}, \eqref{sd-y} and conclusion (ii), we have
\[\lim\limits_{k_j\to \infty}\tilde{x}_{{k_j}+1}=\lim\limits_{k_j\to \infty}(x_{k_j}+\frac{1}{1+\alpha}d_{k_j}^x)=x_*,
\lim\limits_{k_j\to \infty}\tilde{y}_{{k_j}+1}=\lim\limits_{k_j\to \infty}(y_{k_j}+\frac{1}{1+\alpha}d_{k_j}^y)=y_*.\]
By the continuity of $\nabla f$ and $\nabla g$, and using $Ax_*=y_*$, taking the limit in \eqref{xoc} and \eqref{yoc} as $k=k_j\to \infty$, we obtain
\[
\nabla f(x_*)-A^{\top}\lambda_*=0,~\nabla g(y_*)-\lambda_*=0,~Ax_*-y_*=0.
\]
Thus, $w_*=(x_*,y_*,\lambda_*)$ satisfies the KKT conditions and belongs to crit $L_\beta$.

(v) By the definition of $\hat{w}_k$ and $d_k^y \to 0$ as $k \to \infty$, the claim follows directly.
\end{proof}

The following result  is a premise to prove the strong convergence of HAP-PRS-SQP.
\begin{lemma}\label{lem5}
If Assumptions \ref{ass1} and \ref{ass3} hold, then there exists a constant $c>0$ such that
\[\|\nabla_{\hat{w}} \hat{L}_{\beta}(\hat{w}_{k+1})\|\leq c(\|{x}_{k+1}-x_{k}\|+\|{y}_{k+1}-y_{k}\|+\|{y}_{k}-y_{k-1}\|).\]
\end{lemma}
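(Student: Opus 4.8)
The plan is to compute $\nabla_{\hat w}\hat L_\beta(\hat w_{k+1})$ componentwise and bound each block by the three distances $\|x_{k+1}-x_k\|$, $\|y_{k+1}-y_k\|$, $\|y_k-y_{k-1}\|$, using the optimality conditions \eqref{xoc}, \eqref{yoc}, the direction relations \eqref{sd-x}, \eqref{sd-y}, the multiplier updates \eqref{lambda1}, \eqref{lambda2}, the Lipschitz bounds \eqref{NF5}, the uniform spectral bounds \eqref{F20}, and the step-size lower bound $\gamma$ from Lemma~\ref{lem2}. Since $\hat L_\beta(\hat w)=L_\beta(w)+\frac{6}{|r+s|\beta}\big(L_g^2+\beta^2+(\frac{\eta_2^y}{1+\theta})^2\big)\|d^y\|^2$, its gradient has four blocks: $\nabla_x$, $\nabla_y$, $\nabla_\lambda$, and the block in the auxiliary variable $d^y$, the last of which is simply $\frac{12}{|r+s|\beta}\big(L_g^2+\beta^2+(\frac{\eta_2^y}{1+\theta})^2\big)d_k^y$, and $\|d_k^y\|=(1+\alpha)\|\tilde y_{k+1}-y_k\|\le$ (const)$\|y_{k+1}-y_k\|$ by \eqref{sd-y} together with $t_k^y\ge\gamma$ as in \eqref{xleqd}. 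So that block is immediately controlled by $\|y_{k+1}-y_k\|$.

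For the $\nabla_\lambda$ block, \eqref{F2} gives $\nabla_\lambda L_\beta(w_{k+1})=-(Ax_{k+1}-y_{k+1})$, and by \eqref{lam2} this equals $\frac{1}{(r+s)\beta}(\lambda_{k+1}-\lambda_k)-\frac{r}{r+s}(y_k-y_{k+1})$; then \eqref{lk1lk} bounds $\|\lambda_{k+1}-\lambda_k\|$ by a linear combination of $\|y_k-y_{k-1}\|$, $\|A(x_{k+1}-x_k)\|$, $\|y_{k+1}-y_k\|$, $\|d_k^y\|$, $\|d_{k-1}^y\|$ — and $\|d_k^y\|,\|d_{k-1}^y\|$ are themselves bounded by $\|y_{k+1}-y_k\|$ and $\|y_k-y_{k-1}\|$ via \eqref{sd-y} and $t\ge\gamma$. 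For the $\nabla_y$ block I would use \eqref{F2} to write $\nabla_y L_\beta(w_{k+1})=\nabla g(y_{k+1})-(\lambda_{k+1}-\beta(Ax_{k+1}-y_{k+1}))$, then subtract the $y$-optimality condition \eqref{yoc} (which reads $\nabla g(y_k)-(\lambda_{k+1/2}-\beta(Ax_{k+1}-y_k))+\mathcal H_k^y(\tilde y_{k+1}-y_k)=0$, recalling $\lambda_{k+1/2}-\beta(Ax_{k+1}-y_k)$ relates to $\lambda_{k+1}$ through \eqref{lambda2}); the difference produces $\nabla g(y_{k+1})-\nabla g(y_k)$ (Lipschitz: $\le L_g\|y_{k+1}-y_k\|$), multiplier differences already handled, a term $\beta\|y_{k+1}-y_k\|$, and $\|\mathcal H_k^y(\tilde y_{k+1}-y_k)\|\le\frac{\eta_2^y}{1+\alpha}\|d_k^y\|$, all controlled. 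The $\nabla_x$ block is analogous: write $\nabla_x L_\beta(w_{k+1})=\nabla f(x_{k+1})-A^\top(\lambda_{k+1}-\beta(Ax_{k+1}-y_{k+1}))$, subtract the $x$-optimality condition \eqref{xoc} at iteration $k$ (which gives $\nabla f(x_k)-A^\top(\lambda_k-\beta(Ax_k-y_k))+\mathcal H_k^x(\tilde x_{k+1}-x_k)=0$); the difference yields $\nabla f(x_{k+1})-\nabla f(x_k)$ (Lipschitz: $\le L_f\|x_{k+1}-x_k\|$), terms $A^\top$ times multiplier and feasibility-residual differences (handled above and by $\|Ax_{k+1}-y_{k+1}\|$, $\|Ax_k-y_k\|$ expressed through $\lambda$-differences and $\|y\|$-differences via \eqref{lam2}), and $\|\mathcal H_k^x(\tilde x_{k+1}-x_k)\|\le\frac{\eta_2^x}{1+\alpha}\|d_k^x\|\le$ (const)$\|x_{k+1}-x_k\|$.

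Collecting the four blocks, each is a finite linear combination of $\|x_{k+1}-x_k\|$, $\|y_{k+1}-y_k\|$, $\|y_k-y_{k-1}\|$ with coefficients depending only on the fixed parameters $(L_f,L_g,\beta,A,r,s,\alpha,\gamma,\eta_1^{x},\eta_1^{y},\eta_2^{x},\eta_2^{y})$; taking $c$ to be a suitable maximum of these coefficients (times a dimensional constant from combining the blocks into the full gradient norm) finishes the proof. The main obstacle — really the only nontrivial bookkeeping — is tracking the feasibility residuals $Ax_{k+1}-y_k$ and $Ax_{k+1}-y_{k+1}$, since they do not vanish and are not directly small; the key idea, already present in \eqref{lam2}–\eqref{LK1LK}, is that they are linear in $(\lambda_{k+1}-\lambda_k)$ and $(y_k-y_{k+1})$, so once $\|\lambda_{k+1}-\lambda_k\|$ is bounded by the three target distances (via \eqref{lk1lk}) everything closes up. One should also double-check the mismatch between $\theta$ (in the definition of $\hat L_\beta$) and $\alpha$ (in \eqref{sd-y}); I will treat them as the same fixed parameter, as the rest of the paper evidently intends.
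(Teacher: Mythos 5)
Your proposal follows essentially the same route as the paper's proof: compute the four gradient blocks of $\hat L_\beta$ at $\hat w_{k+1}$, subtract the optimality conditions \eqref{xoc} and \eqref{yoc} for the $x$- and $y$-blocks, use \eqref{lam2} to rewrite the feasibility residuals in terms of $\lambda_{k+1}-\lambda_k$ and $y_{k+1}-y_k$, and close the argument by bounding $\|\lambda_{k+1}-\lambda_k\|$ via \eqref{lk1lk} together with the stepsize lower bound $\gamma$. The bookkeeping matches the paper's (including your reading of $\theta$ as $\alpha$ in the definition of $\hat L_\beta$), so the proposal is correct and not materially different.
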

\begin{proof}
From the definition of $\hat{L}_{\beta}$ and \eqref{F2}, we have
\begin{subequations}\label{NF41}
\begin{numcases}{}	\nabla_x \hat{L}_{\beta}(\hat{w}_{k+1}) = \nabla f(x_{k+1})-A^{\top}\lambda_{k+1}+\beta A^{\top}(A x_{k+1}-y_{k+1}),\label{NF41a}\\
	\nabla_y \hat{L}_{\beta}(\hat{w}_{k+1}) = \nabla g (y_{k+1})-\lambda_{k+1}+\beta(A x_{k+1}-y_{k+1}),\label{NF41b}\\
	\nabla_\lambda \hat{L}_{\beta}(\hat{w}_{k+1})= -(A x_{k+1}-y_{k+1}),\label{NF41c}\\
	\nabla_{d^y} \hat{L}_{\beta}(\hat{w}_{k+1}) = \frac{12}{|r+s|\beta}\left(L_g^2+\beta ^2+\left(\frac{\eta_2^y}{1+\theta}\right)^2\right)d^y_k.\label{NF41d}
\end{numcases}
\end{subequations}
Combining \eqref{NF41a} with \eqref{xoc}, we obtain
\[\begin{aligned}
\nabla_x \hat{L}_{\beta}(\hat{w}_{k+1})	=&\nabla f(x_{k+1})-A^{\top}\lambda_{k+1}+\beta A^{\top}(A x_{k+1}-y_{k+1})\\
&-\left[ \nabla f(x_{k})-A^{\top}\lambda_{k}+\beta A^{\top}(A x_{k}-y_{k})+\mathcal{H}_k^x(\tilde{x}_{k+1}-x_{k}) \right]\\
=&\nabla f(x_{k+1})-\nabla f(x_{k})-A^{\top}(\lambda_{k+1}-\lambda_{k})\\
&+\beta A^{\top}[A (x_{k+1}-x_{k})+y_{k}-y_{k+1}]-\mathcal{H}_k^x(\tilde{x}_{k+1}-x_{k}).
\end{aligned}\]
Using \eqref{NF1}, \eqref{NF5}, \eqref{F20a} and the fact that
\[
\|\tilde{x}_{k+1}-x_{k}\| \overset{\eqref{sd-x}}{=}\frac{ \|d_k^x\|}{1+\alpha}=\frac{\|t_k^xd_k^x\|}{(1+\alpha)t_k^x} 
\leq \frac{ \|{x}_{k+1}-x_{k}\|}{(1+\alpha)\gamma}=\mathcal{O}(\|{x}_{k+1}-x_{k}\|),
\]
we derive
\begin{equation}\label{pxl}
\|\nabla_x \hat{L}_{\beta}(\hat{w}_{k+1})\|\leq \mathcal{O}(\|{x}_{k+1}-x_{k}\|)+\mathcal{O}(\|{y}_{k+1}-y_{k}\|)+\mathcal{O}(\|{\lambda}_{k+1}-\lambda_{k}\|).
\end{equation}
From \eqref{NF41b} and \eqref{yoc}, we have
\[\begin{aligned}
\nabla_y \hat{L}_{\beta}(\hat{w}_{k+1})&=\nabla g (y_{k+1})-\lambda_{k+1}+\beta(A x_{k+1}-y_{k+1})\\
& \ \ \ -[\nabla g (y_{k})-(\lambda_{k+\frac{1}{2}}-\beta(A x_{k+1}-y_{k}))+\mathcal{H}_k^y(\tilde{y}_{k+1}-y_{k})]\\
& =\nabla g(y_{k+1})-\nabla g(y_{k})-(\lambda_{k+1}-\lambda_{k+\frac{1}{2}})+\beta(y_{k}-y_{k+1})
-\mathcal{H}_k^y(\tilde{y}_{k+1}-y_{k}).
\end{aligned}\]
Using \eqref{sd-y}, we also have
\begin{equation}\label{tyk1yk}
\|\tilde{y}_{k+1}-y_{k}\| =\frac{ \|d_k^y\|}{1+\alpha}=\frac{\|t_k^y d_k^y\|}{(1+\alpha)t_k^y} 
\leq \frac{\|{y}_{k+1}-y_{k}\|}{(1+\alpha)\gamma} =\mathcal{O}(\|{y}_{k+1}-y_{k}\|).
\end{equation}
Combining these with \eqref{NF1}, \eqref{NF5} and \eqref{F20a}, we obtain  	
\[
\|\nabla_y \hat{L}_{\beta}(\hat{w}_{k+1})\|\leq\mathcal{O}(\|{y}_{k+1}-y_{k}\|)
+\mathcal{O}(\|{\lambda}_{k+1}-\lambda_{k+\frac{1}{2}}\|).
\]
From \eqref{lambda2} and \eqref{lam2}, we further bound $\|\lambda_{k+1}-\lambda_{k+\frac{1}{2}}\|$ as
\[
\|\lambda_{k+1}-\lambda_{k+\frac{1}{2}}\|=\|s\beta(Ax_{k+1}-y_{k+1})\|=\mathcal{O}(\|{y}_{k+1}-y_{k}\|)+\mathcal{O}(\|{\lambda}_{k+1}-\lambda_{k}\|).
\]
Thus, we conclude
\begin{equation}\label{NF45}
\|\nabla_y \hat{L}_{\beta}(\hat{w}_{k+1})\|\leq\mathcal{O}(\|{y}_{k+1}-y_{k}\|)
+\mathcal{O}(\|{\lambda}_{k+1}-\lambda_{k}\|).
\end{equation}
From \eqref{NF41c} and \eqref{lam2}, we have
\begin{equation}\label{NF47}
\|\nabla_\lambda \hat{L}_{\beta}(\hat{w}_{k+1})\|=\|Ax_{k+1}-y_{k+1}\|=\mathcal{O}(\|{y}_{k+1}-y_{k}\|)+\mathcal{O}(\|{\lambda}_{k+1}-\lambda_{k}\|).
\end{equation}
From \eqref{NF41d} and \eqref{tyk1yk}, we obtain
\begin{equation}\label{NF50}
\|\nabla_{d^y}\hat{L}_{\beta}(\hat{w}_{k+1})\|=\mathcal{O}(\|d^y_k\|)=\mathcal{O}(\|{y}_{k+1}-y_{k}\|).
\end{equation}
By \eqref{lk1lk} and \eqref{tyk1yk}, we derive
\begin{equation}\label{NF55}
\|{\lambda}_{k+1}-\lambda_{k}\|\leq\mathcal{O}(\|{x}_{k+1}-x_{k}\|)
+\mathcal{O}(\|{y}_{k+1}-y_{k}\|)+\mathcal{O}(\|{y}_{k}-y_{k-1}\|).
\end{equation}
Combining \eqref{pxl} and \eqref{NF45}-\eqref{NF55}, we conclude that the result holds.
\end{proof}

\begin{theorem}\label{the2} (Strong convergence)
Suppose that Assumptions \ref{ass1} and \ref{ass3} hold. If $\hat{L}_{\beta}(\cdot)$ is a KL function, then $\sum_{k=0}^{+\infty} \|{w}_{k+1}-w_{k}\|<+\infty$. Furthermore, the sequence $\{w_k\}$ of iterates converges to a stationary point $w_*$ of $L_{\beta}(\cdot)$, and $(x^*,y^*)$ is a KKT point of problem \eqref{P}.
\end{theorem}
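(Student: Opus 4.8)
The plan is to run the by-now-standard Kurdyka--{\L}ojasiewicz (KL) machinery of Attouch--Bolte--Svaiter, adapted to the one-step-delayed relative error bound of Lemma \ref{lem5}. Write $\hat{L}_* := \lim_k \hat{L}_\beta(\hat{w}_k) = \hat{L}_\beta(\hat{w}_*)$ for any $\hat{w}_* \in \hat{\Omega}$, which exists by Theorem \ref{the1}(i). First dispose of the trivial case: if $\hat{L}_\beta(\hat{w}_{k_0}) = \hat{L}_*$ for some $k_0$, then monotonicity (Lemma \ref{lem3}) forces $\hat{L}_\beta(\hat{w}_k) = \hat{L}_*$ for all $k \geq k_0$, whence $\delta^x\|d_k^x\|^2 + \delta^y\|d_k^y\|^2 = 0$, i.e. $d_k = 0$, for all $k \geq k_0$; then \eqref{xleqd} and \eqref{NF55} give $w_{k+1} = w_k$ for $k\ge k_0$, the tail sum is zero, and $w_*$ is a KKT point by Lemma \ref{lem1}(ii). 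So from now on assume $\hat{L}_\beta(\hat{w}_k) > \hat{L}_*$ for all $k$.

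Next I would invoke the uniform KL property. By Theorem \ref{the1}(iii), $\hat{\Omega}$ is a nonempty compact set; by Theorem \ref{the1}(i), $\hat{L}_\beta$ is finite and constant (equal to $\hat{L}_*$) on $\hat{\Omega}$; and $d(\hat{w}_k, \hat{\Omega}) \to 0$. Since $\hat{L}_\beta$ is a KL function, Lemma \ref{KLP} supplies $\epsilon, \sigma > 0$ and $\varphi \in \Phi_\sigma$ with $\varphi'(\hat{L}_\beta(\hat{w}) - \hat{L}_*)\, d(0, \partial\hat{L}_\beta(\hat{w})) \geq 1$ on the relevant set; as $\hat{L}_\beta$ is continuously differentiable, $d(0, \partial\hat{L}_\beta(\hat{w})) = \|\nabla_{\hat{w}}\hat{L}_\beta(\hat{w})\|$. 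Because $d(\hat{w}_k, \hat{\Omega}) \to 0$ and $\hat{L}_\beta(\hat{w}_k) \downarrow \hat{L}_*$, there is $K$ such that for all $k \geq K$ the KL inequality applies at $\hat{w}_k$, i.e. $\varphi'(\hat{L}_\beta(\hat{w}_k) - \hat{L}_*) \geq 1/\|\nabla_{\hat{w}}\hat{L}_\beta(\hat{w}_k)\|$.

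The core estimate combines three ingredients. Set $a_k := \|d_k^x\| + \|d_k^y\|$ and $\Delta_k := \varphi(\hat{L}_\beta(\hat{w}_k) - \hat{L}_*)$, and $\delta := \min\{\delta^x, \delta^y\} > 0$. Concavity of $\varphi$ together with Lemma \ref{lem3} gives
\[
\Delta_k - \Delta_{k+1} \geq \varphi'(\hat{L}_\beta(\hat{w}_k) - \hat{L}_*)\big(\hat{L}_\beta(\hat{w}_k) - \hat{L}_\beta(\hat{w}_{k+1})\big) \geq \frac{\delta}{\|\nabla_{\hat{w}}\hat{L}_\beta(\hat{w}_k)\|}\big(\|d_k^x\|^2 + \|d_k^y\|^2\big) \geq \frac{\delta\, a_k^2}{2\|\nabla_{\hat{w}}\hat{L}_\beta(\hat{w}_k)\|}.
\]
Lemma \ref{lem5} (at index $k-1$) and \eqref{xleqd} bound $\|\nabla_{\hat{w}}\hat{L}_\beta(\hat{w}_k)\| \leq c(\|d_{k-1}^x\| + \|d_{k-1}^y\| + \|d_{k-2}^y\|) \leq c(a_{k-1} + a_{k-2})$, so $a_k^2 \leq \tfrac{2c}{\delta}(\Delta_k - \Delta_{k+1})(a_{k-1} + a_{k-2})$. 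Using $\sqrt{uv} = \sqrt{(u/2)(2v)} \leq \tfrac14 u + v$ yields $a_k \leq \tfrac14(a_{k-1} + a_{k-2}) + \tfrac{2c}{\delta}(\Delta_k - \Delta_{k+1})$. Summing from $K$ to $N$, telescoping $\sum(\Delta_k - \Delta_{k+1}) \leq \Delta_K$, and absorbing $\tfrac14\sum(a_{k-1}+a_{k-2}) \leq \tfrac12\sum_{k\ge K-2} a_k$ into the left-hand side gives $\sum_{k=K}^N a_k \leq a_{K-1} + a_{K-2} + \tfrac{4c}{\delta}\Delta_K$, uniformly in $N$; hence $\sum_k a_k < +\infty$. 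Then \eqref{xleqd} and \eqref{NF55} give $\sum_k \|w_{k+1} - w_k\| < +\infty$, so $\{w_k\}$ is Cauchy and converges to some $w_*$; by Theorem \ref{the1}(iv), $w_* \in \mathrm{crit}\,L_\beta$, and therefore $(x^*, y^*)$ is a KKT point of \eqref{P} with multiplier $\lambda^*$.

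The main obstacle I anticipate is the bookkeeping of the index delay: Lemma \ref{lem5} expresses the subgradient at $\hat{w}_{k+1}$ through $d_k$ \emph{and} $d_{k-1}$, so the KL recursion couples $a_k$ with both $a_{k-1}$ and $a_{k-2}$, and one must arrange the summation (and the choice of $K$, using simultaneously $d(\hat{w}_k,\hat{\Omega})\to 0$ and $\hat{L}_\beta(\hat{w}_k)\downarrow\hat{L}_*$) so that this two-step memory still telescopes cleanly; the remaining manipulations are routine.
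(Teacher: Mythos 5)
Your proposal is correct and follows essentially the same route as the paper's proof: the same two-case split, the same invocation of the uniform KL property on $\hat{\Omega}$, the same combination of the sufficient decrease from Lemma \ref{lem3} with the delayed relative-error bound of Lemma \ref{lem5}, and the same AM--GM plus telescoping absorption of the two-step memory. The only differences are cosmetic (you sum $a_k=\|d_k^x\|+\|d_k^y\|$ directly with weight $\tfrac14$, whereas the paper works with $\Gamma_k$ in terms of iterate differences and absorbs with coefficients $3$ and $2$), and both yield the same conclusion.
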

\begin{proof} From Theorem \ref{the1}, we have $\lim\limits_{k \to \infty} \hat{L}_{\beta}(\hat{w}_{k})=\hat{L}_{\beta}(\hat{w}_{*})=\inf_{k} \hat{L}_{\beta}(\hat{w}_{k}),~\forall~\hat{w}_{*}\in \hat{\Omega}$.
We now consider two cases:

Case (i) (Finite termination): If there exists an integer $k_0$ such that $\hat{L}_{\beta}(\hat{w}_{k_0})=\hat{L}_{\beta}(\hat{w}_{*})$, then by Lemma \ref{lem3}, for all $k\geq k_0$,
\[
\delta^x\|d_{k}^x\|^2+\delta^y\|d_{k}^y\|^2
\leq \hat{L}_{\beta}(\hat{w}_{k+1})-\hat{L}_{\beta}(\hat{w}_{k}) \leq \hat{L}_{\beta}(\hat{w}_{k_0})-\hat{L}_{\beta}(\hat{w}_{*})=0.
\]
Since $\delta^x>0$ and $\delta^y>0$, it follows that $\|d_{k}^x\|=\|d_{k}^y\|=0$, for all $k\geq k_0$. From \eqref{LK1LK} and \eqref{xleqd}, we deduce ${\lambda}_{k+1}=\lambda_{k}$, ${x}_{k+1}=x_{k}$ and ${y}_{k+1}=y_{k}$, $\forall\ k\geq k_0$. Thus, ${w}_{k+1}=w_{k}$, for all $k\geq k_0$, and the claim holds.

Case (ii) (Infinite case): Assume $\hat{L}_{\beta}(\hat{w}_{k})>\hat{L}_{\beta}(\hat{w}_{*})$, for all $k$. By Theorem \ref{the1} (i) and Lemma \ref{KLP}, $\hat{L}_{\beta}(\cdot)$ satisfies the uniformized KL property with $(\epsilon,\eta,\varphi)$. Since $d(\hat{w}_k, \hat{\Omega}) \to 0$ and $\inf_k \hat{L}_{\beta}(\hat{w}_{k})=\hat{L}_{\beta}(\hat{w}_{*})$, for the given positive parameters $\epsilon$ and $\eta$ in Lemma \ref{KLP}, there exists an integer $\tilde{k}>0$ such that for all $k>\tilde{k}$,
\[
d(\hat{w}_k, \hat{\Omega}) <\epsilon,~\hat{L}_{\beta}(\hat{w}_{*})<\hat{L}_{\beta}(\hat{w}_{k})<\hat{L}_{\beta}(\hat{w}_{*})+\eta.
\]
By the uniformized KL property, we have
\begin{equation}\label{YKL}
\varphi ' (\hat{L}_{\beta}(\hat{w}_{k})-\hat{L}_{\beta}(\hat{w}_{*}))\|\nabla\hat{L}_{\beta}(\hat{w}_{k})\|\geq 1,~\forall~k>\tilde{k}.
\end{equation}
Using the concavity of $\varphi$, we obtain
\[\varphi(\hat{L}_{\beta}(\hat{w}_{k})-\hat{L}_{\beta}(\hat{w}_{*}))-\varphi(\hat{L}_{\beta}(\hat{w}_{k+1})-\hat{L}_{\beta}(\hat{w}_{*}))
\geq  \varphi'(\hat{L}_{\beta}(\hat{w}_{k})-\hat{L}_{\beta}(\hat{w}_{*}))(\hat{L}_{\beta}(\hat{w}_{k})-\hat{L}_{\beta}(\hat{w}_{k+1})).\]
Combining this with $\varphi'(\hat{L}_{\beta}(\hat{w}_{k})-\hat{L}_{\beta}(\hat{w}_{*}))>0$, \eqref{YKL} and Lemma \ref{lem5}, we derive
\[\begin{aligned}
\hat{L}_{\beta}(\hat{w}_{k})-\hat{L}_{\beta}(\hat{w}_{k+1}) &\leq \frac{\varphi(\hat{L}_{\beta}(\hat{w}_{k})-\hat{L}_{\beta}(\hat{w}_{*}))-\varphi(\hat{L}_{\beta}(\hat{w}_{k+1})
	-\hat{L}_{\beta}(\hat{w}_{*}))}{\varphi'(\hat{L}_{\beta}(\hat{w}_{k})-\hat{L}_{\beta}(\hat{w}_{*}))}\\
&\overset{\eqref{YKL}}{\leq} \Xi_{k,k+1} \|\nabla \hat{L}_{\beta}(\hat{w}_{k})\| \leq c \Gamma_k \Xi_{k,k+1},
\end{aligned}\]
where $\Xi_{p,q+1}=\varphi(\hat{L}_{\beta}(\hat{w}_{p})-\hat{L}_{\beta}(\hat{w}_{*}))
-\varphi(\hat{L}_{\beta}(\hat{w}_{q+1})-\hat{L}_{\beta}(\hat{w}_{*}))$ and $\Gamma_k=\|{x}_{k}-x_{k-1}\|+\|{y}_{k}-y_{k-1}\|+\|{y}_{k-1}-y_{k-2}\|$.
From \eqref{xleqd} and Lemma \ref{lem3}, we have
\[\delta^x\|{x}_{k+1}-x_{k}\|^2+\delta^y \|{y}_{k+1}-y_{k}\|^2 \leq \delta^x \|d_{k}^x\|^2+\delta^y \|d_{k}^y\|^2 \leq c \Gamma_k\Xi_{k,k+1},~\forall~k>\tilde{k}.\]
Let $\xi=\min\{\delta^x,\delta^y\}>0$. Then
\[\|{x}_{k+1}-x_{k}\|^2+\|{y}_{k+1}-y_{k}\|^2 \leq \frac{c}{\xi} \Gamma_k \Xi_{k,k+1},~\forall~k>\tilde{k}.\]
This implies
\[(\|{x}_{k+1}-x_{k}\|+\|{y}_{k+1}-y_{k}\|)^2 \leq 2(\|{x}_{k+1}-x_{k}\|^2+\|{y}_{k+1}-y_{k}\|^2) \leq 2\frac{c}{\xi} \Gamma_k \Xi_{k,k+1}.\]
Thus, for all $i>\tilde{k}$,
\[
4(\|{x}_{i+1}-x_{i}\|+\|{y}_{i+1}-y_{i}\|)\leq 2\sqrt{(\frac{8c}{\xi}\Xi_{i,i+1})\Gamma_i}\leq \Gamma_i+\frac{8c}{\xi}\Xi_{i,i+1}.
\]
Summing up the inequality above for $i=k+1, \dots, q$, we obtain
\[
4\sum_{i={k}+1}^{q} (\|{x}_{i+1}-x_{i}\|+\|{y}_{i+1}-y_{i}\|)\leq \sum_{i={k}+1}^{q} \Gamma_i+\frac{8c}{\xi}\Xi_{{k}+1,q+1}.
\]
Since $\varphi(\hat{L}_{\beta}(\hat{w}_{q+1})-\hat{L}_{\beta}(\hat{w}_{*}))\geq 0$, it follows that
\begin{align}
&3\sum_{i={k}+1}^{q}\|{x}_{i+1}-x_{i}\|+2\sum_{i={k}+1}^{q}\|{y}_{i+1}-y_{i}\|\nonumber\\
\leq & \|{x}_{{k}+1}-x_{{k}}\|+2\|{y}_{{k}+1}-y_{{k}}\|+\|{y}_{{k}}-y_{{k}-1}\|-\|{x}_{q+1}-x_{q}\|-2\|{y}_{q+1}-y_{q}\|\nonumber\\
& -\|{y}_{q}-y_{q-1}\|+\frac{8c}{\xi}\varphi(\hat{L}_{\beta}(\hat{w}_{{k}+1})-\hat{L}_{\beta}(\hat{w}_{*}))\nonumber\\
\leq& \|{x}_{{k}+1}-x_{{k}}\|+2\|{y}_{{k}+1}-y_{{k}}\|+\|{y}_{{k}}-y_{{k}-1}\| +\frac{8c}{\xi}\varphi(\hat{L}_{\beta}(\hat{w}_{{k}+1})-\hat{L}_{\beta}(\hat{w}_{*})).\label{sumxy}
\end{align}
Taking $k=\tilde{k}$ and letting $q\to\infty$, we conclude
\[3\sum_{i={k}+1}^{+\infty}\|{x}_{i+1}-x_{i}\|+2\sum_{i={k}+1}^{+\infty}\|{y}_{i+1}-y_{i}\|<+\infty.\]
Thus, $\sum_{k=0}^{+\infty} \|{x}_{k+1}-x_{k}\|<+\infty$ and $\sum_{k=0}^{+\infty} \|{y}_{k+1}-y_{k}\|<+\infty$. From \eqref{lk1lk} and \eqref{tyk1yk}, we also have $\sum_{k=0}^{+\infty} \|{\lambda}_{k+1}-\lambda_{k}\|<+\infty$. Therefore, $\sum_{k=0}^{+\infty} \|{w}_{k+1}-w_{k}\| <+\infty$.
This implies that $\{w_k\}$ is a Cauchy sequence, and hence convergent. Combining this with Theorem \ref{the1} (iv,v), the proof of Theorem \ref{the2} is completed.
\end{proof}

Finally, we discuss the convergence rate of the HAP-PRS-SQP algorithm.
\begin{theorem}\label{the3}
Suppose that Assumptions \ref{ass1} and \ref{ass3} hold. If $\hat{L}_{\beta}(\hat{w})$ is a KL function with the correlation function $\varphi(t)=at^{1-\vartheta}$, where $\vartheta\in [0,1)$, $a>0$, then the following three conclusions hold.

(i) If $\vartheta=0$, the HAP-PRS-SQP algorithm terminates at a stationary point of $L_{\beta}(w)$  after a finite number of iterations;

(ii) If $\vartheta\in (0, \frac{1}{2}]$, there exists a constant $\tau \in (0,1)$ such that $\|w_k-w_*\|=\mathcal{O}(\tau ^{k})$;

(iii) If $\vartheta\in (\frac{1}{2},1)$, then $\|w_k-w_*\|=\mathcal{O}(k^{\frac{1-\vartheta}{1-2\vartheta}})$.
\end{theorem}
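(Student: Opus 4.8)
The plan is to transfer the summability estimate \eqref{sumxy} established in the proof of Theorem \ref{the2} into a closed scalar recursion for the tail sums
\[\Delta_k:=\sum_{i=k}^{\infty}\big(\|x_{i+1}-x_i\|+\|y_{i+1}-y_i\|\big),\]
which are finite by Theorem \ref{the2} and, through \eqref{NF55}, control the distance to the limit via $\|w_k-w_*\|\le\mathcal{O}(\Delta_{k-1})$. Writing $\theta_k:=\hat{L}_{\beta}(\hat{w}_k)-\hat{L}_{\beta}(\hat{w}_*)\ge0$ (monotonically decreasing to $0$ by Theorem \ref{the1}) and $\Gamma_{k+1}:=\|x_{k+1}-x_k\|+\|y_{k+1}-y_k\|+\|y_k-y_{k-1}\|$, letting $q\to\infty$ in \eqref{sumxy} yields for all large $k$ a bound of the form $\Delta_{k+1}\le c_1\Gamma_{k+1}+c_2\varphi(\theta_{k+1})$; since $\|x_{k+1}-x_k\|+\|y_{k+1}-y_k\|=\Delta_k-\Delta_{k+1}$ and $\|y_k-y_{k-1}\|\le\Delta_{k-1}-\Delta_k$, we also have $\Gamma_{k+1}\le\Delta_{k-1}-\Delta_{k+1}$.

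To control $\varphi(\theta_{k+1})$ I would combine the uniformized KL inequality $\varphi'(\theta_{k+1})\|\nabla\hat{L}_{\beta}(\hat{w}_{k+1})\|\ge1$ with Lemma \ref{lem5}: for $\vartheta>0$ this gives $\theta_{k+1}^{\vartheta}\le a(1-\vartheta)c\,\Gamma_{k+1}$, whence $\varphi(\theta_{k+1})=a\theta_{k+1}^{1-\vartheta}\le c_3\Gamma_{k+1}^{(1-\vartheta)/\vartheta}\le c_3(\Delta_{k-1}-\Delta_{k+1})^{(1-\vartheta)/\vartheta}$, so the master recursion becomes
\[\Delta_{k+1}\le c_1(\Delta_{k-1}-\Delta_{k+1})+c_3(\Delta_{k-1}-\Delta_{k+1})^{(1-\vartheta)/\vartheta}.\]
When $\vartheta=0$, $\varphi'\equiv a$, so the KL inequality forces $\|\nabla\hat{L}_{\beta}(\hat{w}_k)\|\ge1/a$ for large $k$, contradicting $\|\nabla\hat{L}_{\beta}(\hat{w}_k)\|\to0$ (Lemma \ref{lem5} with Theorem \ref{the1}(ii)) unless $d_k^x=d_k^y=0$ for some $k$, i.e. finite termination as in Case (i) of Theorem \ref{the2}; this proves (i). When $\vartheta\in(0,\tfrac12]$ we have $(1-\vartheta)/\vartheta\ge1$ and $\Gamma_{k+1}\le1$ eventually, so the last term is dominated by $\Delta_{k-1}-\Delta_{k+1}$ and the recursion collapses to $\Delta_{k+1}\le q\Delta_{k-1}$ with $q=(c_1+c_3)/(1+c_1+c_3)\in(0,1)$; separating the even- and odd-indexed subsequences gives $\Delta_k=\mathcal{O}(q^{k/2})$, hence $\|w_k-w_*\|=\mathcal{O}(\tau^k)$ with $\tau=\sqrt q\in(0,1)$, proving (ii).

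For (iii) set $p:=(1-\vartheta)/\vartheta\in(0,1)$. Since $\Delta_{k-1}-\Delta_{k+1}\to0$, the term $(\Delta_{k-1}-\Delta_{k+1})^{p}$ eventually dominates $(\Delta_{k-1}-\Delta_{k+1})$, so for all large $k$ the recursion reduces to $\Delta_{k+1}^{1/p}\le c_4(\Delta_{k-1}-\Delta_{k+1})$ with $1/p=\vartheta/(1-\vartheta)>1$. Splitting $\{\Delta_k\}$ into its two parity subsequences, each nonnegative, nonincreasing and satisfying $u_{m+1}^{1/p}\le c_4(u_m-u_{m+1})$, and invoking the standard monotone-sequence lemma for KL rates (see, e.g., \cite{ABS13}), I obtain $u_m=\mathcal{O}(m^{-1/(1/p-1)})=\mathcal{O}(m^{-(1-\vartheta)/(2\vartheta-1)})$; translating back, $\Delta_k=\mathcal{O}(k^{(1-\vartheta)/(1-2\vartheta)})$ and therefore $\|w_k-w_*\|=\mathcal{O}(k^{(1-\vartheta)/(1-2\vartheta)})$.

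The routine part is the chain of $\mathcal{O}(\cdot)$ estimates feeding the master recursion. The one genuinely delicate point is the two-step lag — $\Delta_{k+1}$ is controlled by $\Delta_{k-1}$ rather than $\Delta_k$, owing to the $\|y_k-y_{k-1}\|$ term appearing both in $\Gamma_{k+1}$ and in Lemma \ref{lem5} — which obstructs a direct one-step recursion; this is circumvented by analysing the even- and odd-indexed subsequences of $\{\Delta_k\}$ separately, each satisfying a bona fide one-step recursion with only harmless changes of constants. A secondary care point in (iii) is verifying the hypotheses of the monotone-sequence lemma, namely the monotonicity $\Delta_{k+1}\le\Delta_k$ and the legitimacy, for all sufficiently large $k$, of passing from the mixed bound $\Delta_{k+1}\le c_1(\Delta_{k-1}-\Delta_{k+1})+c_3(\Delta_{k-1}-\Delta_{k+1})^{p}$ to the clean power form.
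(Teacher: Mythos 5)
Your proposal is correct and follows essentially the same route as the paper: the paper's $S_k$ is exactly your $\Delta_k$, it derives the identical master recursion $S_{k+1}\le (S_{k-1}-S_{k+1})+\varepsilon(S_{k-1}-S_{k+1})^{(1-\vartheta)/\vartheta}$ from \eqref{sumxy}, the KL inequality and Lemma \ref{lem5}, and handles $\vartheta=0$ by the same contradiction with $\|\nabla\hat{L}_{\beta}(\hat{w}_k)\|\to 0$. The only difference is cosmetic: the paper bounds $\|\lambda_k-\lambda_*\|$ directly via \eqref{*4.3N} and delegates the final recursion analysis to Theorem 4.3 of \cite{JLY21}, whereas you telescope \eqref{NF55} and carry out the parity-subsequence and monotone-sequence arguments explicitly.
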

\begin{proof}
From Theorem \ref{the2}, the sequence $\{w_k\}$ generated by HAP-PRS-SQP converges to a stationary point $w_*$ of $L_{\beta}(w)$. We proceed by analyzing the cases $\vartheta = 0$ and $\vartheta > 0$ separately.

Case (i) ($\vartheta = 0$):  
If $\vartheta=0$, then $\varphi(t)=at$, $\varphi'(t)\equiv a$. If there exists an iterate index $k_{0}$ such that $\hat{L}_\beta(\hat{w}_{k_{0}})= \hat{L}_\beta(\hat{w}_{*})$, then HAP-PRS-SQP terminates at the $k_0$-th iterate, as shown in part (i) of the proof of Theorem \ref{the2}. If $\hat{L}\beta(\hat{w}k) > \hat{L}\beta(\hat{w})$ for all $k$, then by part (ii) of the proof of Theorem \ref{the2} and the KL property, we have 
\[\varphi ' (\hat{L}_{\beta}(\hat{w}_{k})-\hat{L}_{\beta}(\hat{w}_{*}))\|\nabla\hat{L}_{\beta}(\hat{w}_{k})\|=a\|\nabla\hat{L}_{\beta}(\hat{w}_{k})\|\geq 1.\]
This contradicts Lemma \ref{lem5} and the convergence $w_k \to w*$. Thus, claim (i) holds.

Case (ii) ($\vartheta > 0$): 
Since \eqref{tyk1yk}, we have
\[
\|d_{k-1}^y\|\leq \mathcal{O}(\|y_{k}-y_{k-1}\|)\leq\mathcal{O}(\|y_{k}-y_{*}\|)+\mathcal{O}(\|y_{k-1}-y_{*}\|).
\]
From \eqref{lambdak1}, $\lambda_*=\nabla g(y_*)$, $Ax_*=y_*$, \eqref{NF1}, \eqref{NF5} and \eqref{sd-y}, we derive
\begin{align}
\|\lambda_{k}-\lambda_{*}\|= &\|\nabla g (y_{k-1})-\nabla g (y_{*})+(1-s)\beta [A( x_{k}-x_{*})-(y_{k}-y_{*})]\nonumber\\
&+\beta(y_{k}-y_*)-\beta(y_{k-1}-y_{*})+\frac{1}{1+\alpha}\mathcal{H}_{k-1}^y d_{k-1}^y\|\nonumber\\ 
\leq &\mathcal{O}(\|{x}_{k}-x_{*}\|)+\mathcal{O}(\|{y}_{k}-y_{*}\|)+\mathcal{O}(\|{y}_{k-1}-y_{*}\|)+\mathcal{O}(\|d_{k-1}^y\|)\nonumber\\
\leq & \mathcal{O}(\|{x}_{k}-x_{*}\|)+\mathcal{O}(\|{y}_{k}-y_{*}\|)+\mathcal{O}(\|{y}_{k-1}-y_{*}\|). \label{*4.3N}
\end{align}
Define $S_{k}:=\sum_{i=k}^{+\infty} (\|{x}_{i+1}-x_{i}\|+\|{y}_{i+1}-y_{i}\|)$. By Theorem \ref{the2}, it has $S_{k}\to 0$. Using \eqref{sumxy}, $\varphi(t)=at^{1-\vartheta}$ and $S_{k-1}-S_{k+1}=\|{x}_{k}-x_{k-1}\|+\|{y}_{k}-y_{k-1}\|+\|{x}_{k+1}-x_{k}\|+\|{y}_{k+1}-y_{k}\|$, we obtain
\begin{align}
S_{{k}+1}\leq& \frac{3}{2}\sum_{i=k+1}^{+\infty}\|{x}_{i+1}-x_{i}\|+\sum_{i=k+1}^{+\infty}\|{y}_{i+1}-y_{i}\|\nonumber\\
\leq& \frac{1}{2}\|{x}_{k+1}-x_{k}\|+\|{y}_{k+1}-y_{k}\|+\frac{1}{2}\|{y}_{k}-y_{k-1}\|+\frac{4c}{\xi}\varphi(\hat{L}_{\beta}(\hat{w}_{k+1})-\hat{L}_{\beta}(\hat{w}_{*}))\nonumber\\
\leq& \|{x}_{k+1}-x_{k}\|+\|{y}_{k+1}-y_{k}\|+\|{y}_{k}-y_{k-1}\|+\frac{4c}{\xi}\varphi(\hat{L}_{\beta}(\hat{w}_{k+1})-\hat{L}_{\beta}(\hat{w}_{*}))\nonumber\\
\leq& (S_{k-1}-S_{k+1}) +\frac{4c}{\xi}a(\hat{L}_{\beta}(\hat{w}_{{k}+1})-\hat{L}_{\beta}(\hat{w}_{*}))^{1-\vartheta},~\forall~k\geq \tilde{k}.\label{4.200N}
\end{align}
Since $\hat{L}_{\beta}$ satisfies the KL property at $w_{*}$ and $\varphi'(t)=a(1-\vartheta)t^{-\vartheta}$, we have 
\[
\varphi^{'}(\hat{L}_{\beta}(\hat{w}_{{k}+1})-\hat{L}_{\beta}(\hat{w}_*)) \|\nabla\hat{L}_{\beta}(\hat{w}_{k})\|
= a(1-\vartheta)(\hat{L}_{\beta}(\hat{w}_{{k}+1})-\hat{L}_{\beta}(\hat{w}_*))^{-\vartheta}\|\nabla\hat{L}_{\beta}(\hat{w}_{k})\| \geq 1.
\]
Combining this with Lemma \ref{lem5} yields
\[(\hat{L}_{\beta}(\hat{w}_{{k}+1})-\hat{L}_{\beta}(\hat{w}_*))^{\vartheta} \leq a(1-\vartheta)\|\nabla\hat{L}_{\beta}(\hat{w}_{k})\|\leq a(1-\vartheta)c(S_{k-1}-S_{k+1}).\]
From the inequality above and \eqref{4.200N}, there exists $\varepsilon>0$ such that
\begin{equation}\label{4.230N}
S_{{k}+1} \leq (S_{k-1}-S_{k+1}) +\varepsilon (S_{k-1}-S_{k+1})^{\frac{1-\vartheta}{\vartheta}},~\forall~k\geq \tilde{k}.
\end{equation}
Additionally, it follows that
\[\begin{aligned}
S_{k+1, q}:=&\sum_{i=k+1}^{q}(\|{x}_{i+1}-x_{i}\|+\|{y}_{i+1}-y_{i}\|) \\
\geq& \sum_{i=k+1}^{q}[(\|{x}_{i}-x_{*}\|-\|{x}_{i+1}-x_{*}\|)+(\|{y}_{i}-y_{*}\|-\|{y}_{i+1}-y_{*}\|)]\\
=&(\|{x}_{k+1}-x_{*}\|-\|{x}_{q+1}-x_{*}\|)+(\|{y}_{k+1}-y_{*}\|-\|{y}_{q+1}-y_{*}\|).
\end{aligned}\]
This means that
\begin{align}
S_{k+1}&= \lim_{q\to\infty} S_{k+1, q} \nonumber\\
&\geq  (\|{x}_{k+1}-x_{*}\|-\lim_{q\to\infty}\|{x}_{q+1}-x_{*}\|)+(\|{y}_{k+1}-y_{*}\|-\lim_{q\to\infty}\|{y}_{q+1}-y_{*}\|) \nonumber\\
&=\|{x}_{k+1}-x_{*}\|+\|{y}_{k+1}-y_{*}\|.\label{*4.0N}
\end{align}
Based on \eqref{*4.3N}, \eqref{4.230N} and \eqref{*4.0N}, the remaining proof of claims (ii) and (iii) follows similarly to the analyses (B1) and (B2) in the proof of Theorem 4.3 in \cite{JLY21}, and is omitted here.
\end{proof}

\section{Numerical experiments}\label{sec5}
In this section, we first test the numerical performance of HAP-PRS-SQP for solving the regularized binary classification problem, in comparison with two splitting SQP methods: Algorithm 2.1 in \cite{JZY20} (denoted by MS-SQO) and Algorithm 1 in \cite{JZY22} (denoted by PRS-SQP-DSM). Then we compare HAP-PRS-SQP with the gradient method for solving a class of smooth LASSO model. All tests are performed using MATLAB R2016b on a 64-bit laptop with Intel i9-13900HX CPU and 32.0 GB RAM.

\subsection{Regularized binary classification problem}
Given a set of data pairs $(a_i, b_i) \in \mathbb{R}^n \times\{-1,1\}$ for $i\in \{1,2,\dots,T\}$, the $L_2$-regularized binary classification problem \cite{JZY22,MXC19} aims to solve the following optimization problem:
\vspace{-0.15cm}
\begin{equation}\label{L2reg}
	\min_{x \in \mathbb{R}^n}~\frac{1}{T} \sum_{i=1}^T[1-\tanh (b_i a_i^{\top} x)]+\frac{\mu}{2}\|Ax\|^2,
\end{equation}
where $1-\tanh (b_i a_i^{\top} x)$ is the sigmoid loss function which is nonconvex and smooth, and $\mu>0$ is the regularization parameter. In this experiment, we fix $\mu  = 0.001$ and the matrix
\[
A=\left(\begin{array}{ccccc}
	-1 & 1 & & &\\
	& -1 & 1 & &\\
	& & \ddots & \ddots &\\
	& & & -1 & 1
\end{array}\right) \in \mathbb{R}^{(n-1) \times n}.
\]
The data matrices $D=(a_1,\dots,a_T)$ and $d=(b_1,\dots,b_T)^\top$ are generated using
\vspace{-0.15cm}
\[D=\text{normc}(\text{randn}(n,T)),~d=\text{randsrc}(T,1)\]
Problem \eqref{L2reg} can be solved with HAP-PRS-SQP by setting $f(x)=\frac{1}{T} \sum_{i=1}^T[1-\tanh (b_i a_i^{\top} x)]$ and $g(Ax)=\frac{\mu}{2}\|Ax\|^2$. Both $\nabla f(x)$ and $\nabla g(y)$ are Lipschitz continuity, as detailed in \cite[Section 6.2]{YTJ24}. 

The parameters in HAP-PRS-SQP are set as follows: $\rho=0.4$, $\nu=0.6$, $\beta=1$, $\ell=5$, $\sigma=10$. The Hessian approximations $H^x_{k+1}$ and $H^y_{k+1}$ are computed as
\[
\begin{aligned}
	H^x_{k+1}&=\nabla^2 f(x_{k+1})=\mu A^TA,\\
	H^y_{k+1}&=\nabla^2 g(x_{k+1})=\frac{2}{T} \sum_{i=1}^T \tanh (b_i a_i^{\top} y_{k+1})\left[1-\tanh ^2(b_i a_i^{\top} x)\right] a_i a_i^{\top}.
\end{aligned}
\]
This leads
\[
\mathcal{H}_{k+1}^x=\mu A^{\top} A+ (\beta+\ell) I_n,~\mathcal{H}_{k+1}^y=\nabla^2 g(y_{k+1})+(\beta+\sigma) I_n.
\] 
Dual stepsizes $(r,s)$ are determined based on various configurations, ensuring $r + s \neq 0$, and the algorithm is tested with different acceleration parameters $\alpha$.
All parameters of MS-SQO and PRS-SQP-DSM are the same as the originals. 
The initialization is performed with $w_0=(x_0, y_0, \lambda_0)=(0,0,0)$, and the termination condition is based on a maximum of 10,000 iterations or a relative accuracy of $10^{-4}$.

We first analyze the behavior of the dual stepsizes $(r,s)$ for HAP-PRS-SQP in Table \ref{t4.1-1}, where ``Iter, Tcpu, OFV'' stand for the number of iterations, the CPU time (second), and the objective function value at the final iteration point $(x^*,y^*)$, respectively. And ``Fea'' is the feasibility residue, measured by the infinity norm $\|x^*-y^*\|_\infty$. For the given instances, the ALDA $(r+s>0)$ achieves a slightly better OFV and Fea, while the ALDD $(r+s<0)$ takes fewer iterations and less time on average. In the following, we choose $r=0.1$ and $s=1$ which are the best choices in HAP-PRS-SQP according to the numerical results.

\begin{table*}[h]
	\belowrulesep=0pt
	\aboverulesep=0pt
	\centering
	\caption{Numerical results of HAP-PRS-SQP with $\alpha=0$ and different $(r,s)$ on solving Problem \eqref{L2reg}.}\label{t4.1-1}
	\resizebox{1\textwidth}{!}{
		\begin{tabular}{ccccccccccc}
			\toprule
			\multicolumn{3}{c}{Dual update}  & \multicolumn{4}{c}{Problem \eqref{L2reg} with $n=T=100$} & \multicolumn{4}{c}{Problem \eqref{L2reg} with $n=T=1000$} \\
			\cmidrule(r){1-3}\cmidrule(r){4-7}\cmidrule(r){8-11}
			$r+s$& $s$  & $r$  & Iter & Tcpu & OFV & Fea & Iter   & Tcpu   & OFV   & Fea  \\
			\midrule
			\multicolumn{1}{c|}{\multirow{12}{*}{$>0$}} & \multicolumn{1}{c|}{\multirow{8}{*}{1}} & \multicolumn{1}{c|}{5} & 4973 & 6.33 & 0.46889 & \multicolumn{1}{c|}{1.99$\times 10^{-4}$} & 6695 & 299.19 & 0.85892 & 5.50$\times 10^{-5}$\\
			\multicolumn{1}{c|}{} & \multicolumn{1}{c|}{} & \multicolumn{1}{c|}{3}& 4976 & 6.20 & 0.46921 & \multicolumn{1}{c|}{1.79$\times 10^{-4}$} & 6698 & 300.26 & 0.85914 & 4.94$\times 10^{-5}$\\
			\multicolumn{1}{c|}{} & \multicolumn{1}{c|}{} & \multicolumn{1}{c|}{1}& 4984 & 6.24 & 0.47018 & \multicolumn{1}{c|}{1.19$\times 10^{-4}$} & 6707 & 297.99 & 0.85978 & 3.28$\times 10^{-5}$\\
			\multicolumn{1}{c|}{} & \multicolumn{1}{c|}{} & \multicolumn{1}{c|}{0.1}& 4998 & 6.24 & 0.47173 & \multicolumn{1}{c|}{2.16$\times 10^{-5}$} & 6722 & 298.09 & 0.86082 & 5.97$\times 10^{-6}$\\ \cmidrule{3-11}
			\multicolumn{1}{c|}{} & \multicolumn{1}{c|}{} & \multicolumn{1}{c|}{-0.1} & 5004 & 6.31 & 0.47251 & \multicolumn{1}{c|}{2.62$\times 10^{-5}$} & 6729 & 297.81 & 0.86133 & 7.13$\times 10^{-6}$\\
			\multicolumn{1}{c|}{} & \multicolumn{1}{c|}{} & \multicolumn{1}{c|}{-0.3}  & 5014 & 6.33 & 0.47371  & \multicolumn{1}{c|}{1.01$\times 10^{-4}$} & 6740 & 298.11 & 0.86213 & 2.75$\times 10^{-5}$\\
			\multicolumn{1}{c|}{} & \multicolumn{1}{c|}{} & \multicolumn{1}{c|}{-0.5} & 5031 & 6.23 & 0.47588  & \multicolumn{1}{c|}{2.33$\times 10^{-4}$} & 6759 & 298.72 & 0.86354 & 6.36$\times 10^{-5}$\\
			\multicolumn{1}{c|}{} & \multicolumn{1}{c|}{} & \multicolumn{1}{c|}{-0.7} & 5070 & 6.44 & 0.48081 & \multicolumn{1}{c|}{5.35$\times 10^{-4}$} & 6802 & 302.39 & 0.86672 & 1.45$\times 10^{-4}$\\ \cmidrule{2-11}
			\multicolumn{1}{c|}{\multirow{20}{*}{$<0$}} & \multicolumn{1}{c|}{\multirow{8}{*}{-1}} & \multicolumn{1}{c|}{5}& 5040 & 6.37 & 0.47675& \multicolumn{1}{c|}{2.91$\times 10^{-4}$} & 6769 & 301.09 & 0.86411 & 7.93$\times 10^{-5}$\\
			\multicolumn{1}{c|}{} & \multicolumn{1}{c|}{} & \multicolumn{1}{c|}{4} & 5043 & 6.26 & 0.47705& \multicolumn{1}{c|}{3.10$\times 10^{-4}$} & 6772 & 300.37 & 0.86431 & 8.44$\times 10^{-5}$\\
			\multicolumn{1}{c|}{} & \multicolumn{1}{c|}{} & \multicolumn{1}{c|}{3} & 5048 & 6.24 & 0.47767 & \multicolumn{1}{c|}{3.48$\times 10^{-4}$} & 6777 & 300.40 & 0.86472 & 9.47$\times 10^{-5}$\\
			\multicolumn{1}{c|}{} & \multicolumn{1}{c|}{} & \multicolumn{1}{c|}{2} & 5063 & 6.26 & 0.47949 & \multicolumn{1}{c|}{4.60$\times 10^{-4}$} & 6794 & 304.34 & 0.86589 & 1.25$\times 10^{-4}$\\ \cmidrule{1-1}\cmidrule{3-11}
			\multicolumn{1}{c|}{} & \multicolumn{1}{c|}{} & \multicolumn{1}{c|}{1-$10^{-2}$} & 7091 & 8.97 & 1.57106 & \multicolumn{1}{c|}{1.97$\times 10^{-2}$} & 9838 & 436.91 & 1.15587 & 5.95$\times 10^{-3}$\\
			\multicolumn{1}{c|}{} & \multicolumn{1}{c|}{} & \multicolumn{1}{c|}{1-$10^{-3}$}& 3463 & 4.36 & 1.02452& \multicolumn{1}{c|}{2.55$\times 10^{-2}$}& 62 & 2.74 & 0.99916 & 3.14$\times 10^{-3}$\\
			\multicolumn{1}{c|}{} & \multicolumn{1}{c|}{} & \multicolumn{1}{c|}{1-$10^{-4}$}& 84 & 0.11 & 0.99044 & \multicolumn{1}{c|}{2.43$\times 10^{-2}$} & 62 & 2.72 & 0.99915 & 3.08$\times 10^{-3}$\\
			\multicolumn{1}{c|}{} & \multicolumn{1}{c|}{} & \multicolumn{1}{c|}{1-$10^{-5}$}& 85 & 0.14 & 0.99040  & \multicolumn{1}{c|}{2.43$\times 10^{-2}$}& 62 & 2.73 & 0.99915 & 3.07$\times 10^{-3}$\\ \cmidrule{2-11}
			\multicolumn{1}{c|}{} & \multicolumn{1}{c|}{\multirow{4}{*}{1}} & \multicolumn{1}{c|}{-1-$10^{-2}$} & 7183 & 9.01 & 1.56410 & \multicolumn{1}{c|}{1.96$\times 10^{-2}$} & 9847 & 436.12 & 1.14978 & 5.84$\times 10^{-3}$\\
			\multicolumn{1}{c|}{} & \multicolumn{1}{c|}{} & \multicolumn{1}{c|}{-1-$10^{-3}$}& 90 & 0.12 & 0.99081& \multicolumn{1}{c|}{2.46$\times 10^{-2}$} & 74 & 3.25 & 0.99915 & 3.12$\times 10^{-3}$\\
			\multicolumn{1}{c|}{} & \multicolumn{1}{c|}{} & \multicolumn{1}{c|}{-1-$10^{-4}$}& 98 & 0.12 & 0.99041& \multicolumn{1}{c|}{2.43$\times 10^{-2}$} & 78 & 3.42 & 0.99913 & 3.09$\times 10^{-3}$\\
			\multicolumn{1}{c|}{} & \multicolumn{1}{c|}{} & \multicolumn{1}{c|}{-1-$10^{-5}$}& 99 & 0.15 & 0.99037 & \multicolumn{1}{c|}{2.42$\times 10^{-2}$} & 78 & 3.49 & 0.99913 & 3.09$\times 10^{-3}$\\ \cmidrule{2-11}
			\multicolumn{1}{c|}{} & \multicolumn{1}{c|}{\multirow{4}{*}{-$10^{-4}$}} & \multicolumn{1}{c|}{-$10^{-4}$}& 4998 & 6.22 & 0.47095& \multicolumn{1}{c|}{1.48$\times 10^{-2}$} & 6721 & 301.35 & 0.86037 & 5.10$\times 10^{-3}$\\
			\multicolumn{1}{c|}{} & \multicolumn{1}{c|}{} & \multicolumn{1}{c|}{-$10^{-5}$} & 5001 & 6.23 & 0.47129 & \multicolumn{1}{c|}{9.63$\times 10^{-3}$}& 6726 & 298.51 & 0.86058 & 3.15$\times 10^{-3}$\\
			\multicolumn{1}{c|}{} & \multicolumn{1}{c|}{}& \multicolumn{1}{c|}{-$10^{-6}$} & 5002 & 6.28 & 0.47129& \multicolumn{1}{c|}{9.23$\times 10^{-3}$} & 6726 & 300.47 & 0.86061 & 3.00$\times 10^{-3}$\\
			\multicolumn{1}{c|}{} & \multicolumn{1}{c|}{} & \multicolumn{1}{c|}{-$10^{-7}$}& 5002 & 6.32 & 0.47130 & \multicolumn{1}{c|}{9.19$\times 10^{-3}$} & 6726 & 295.43 & 0.86061 & 2.99$\times 10^{-3}$\\
			\botrule
		\end{tabular}
	}
\end{table*}

Then we observe the implementation of HAP-PRS-SQP with different $\alpha$. It follows from Fig. \ref{f4.1-1} that a larger $\alpha$ leads to a better objective function value, while a smaller $\alpha$ leads to a faster decrease in relative accuracy during the early iterations. We fix the acceleration factor $\alpha=2$ in the following experiments.

\begin{figure}[h]
	\begin{subfigure}{0.5\linewidth}
		\centering
		\includegraphics[height=0.7\textwidth]{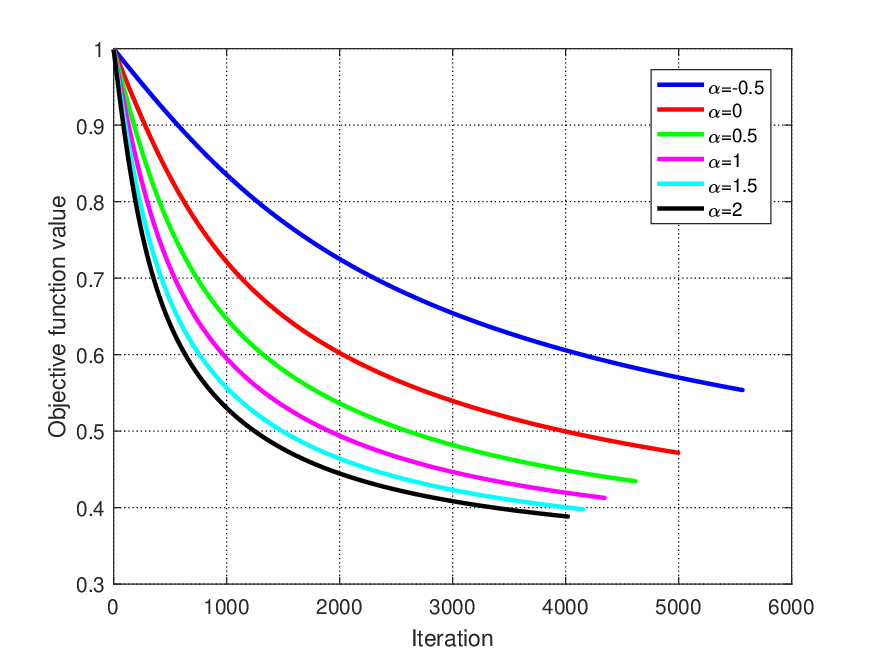}
	\end{subfigure}
	\begin{subfigure}{0.5\linewidth}
		\centering
		\includegraphics[height=0.7\textwidth]{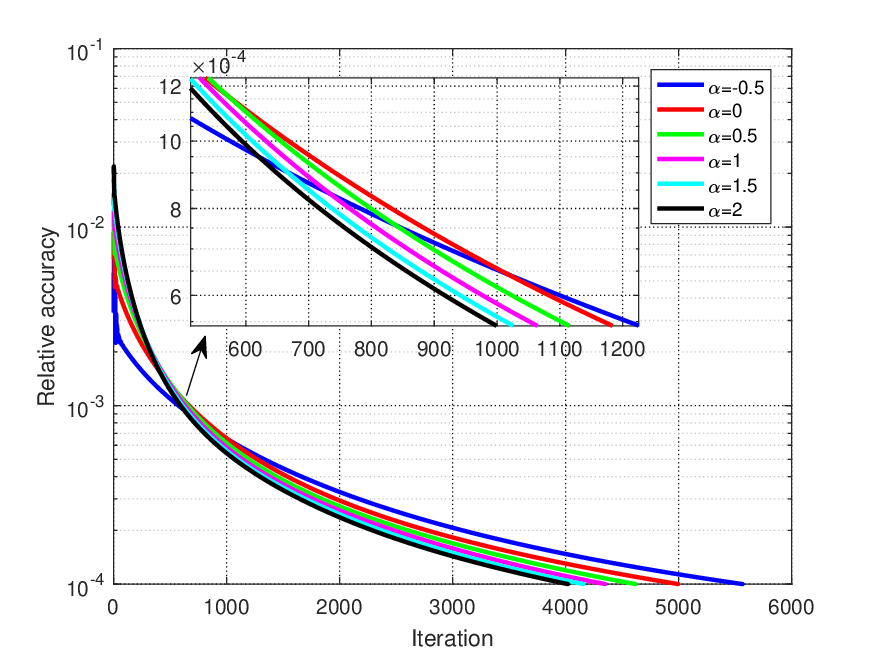}
	\end{subfigure}
	\caption{Performance profiles of HAP-PRS-SQP with different $\alpha$ on Problem \eqref{L2reg} with $n=T=100$.}\label{f4.1-1}
\end{figure}
\vspace{-1cm}
\begin{figure}[h]
	\begin{subfigure}{0.5\linewidth}
		\centering
		\includegraphics[height=0.7\textwidth]{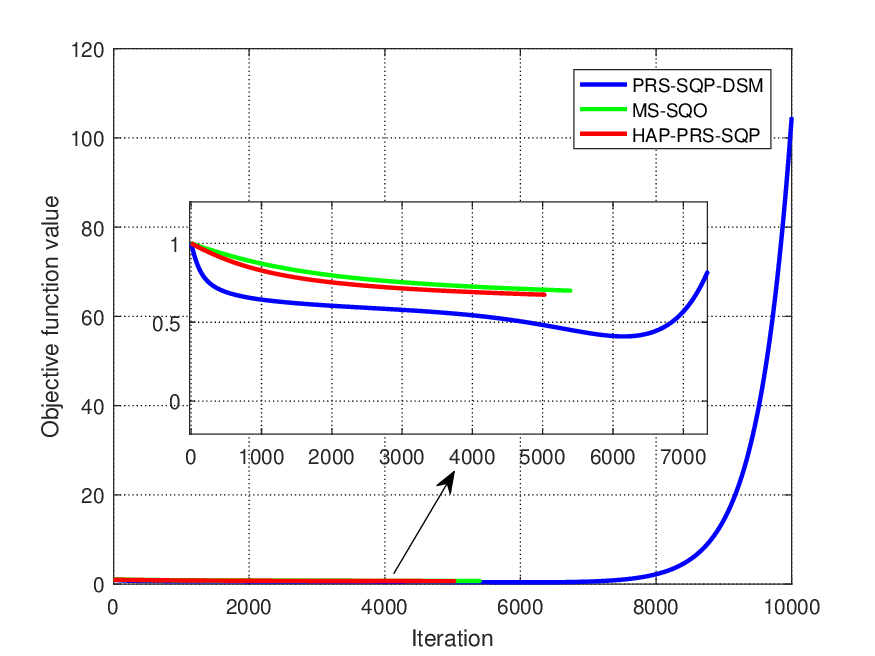}
	\end{subfigure}
	\begin{subfigure}{0.5\linewidth}
		\centering
		\includegraphics[height=0.7\textwidth]{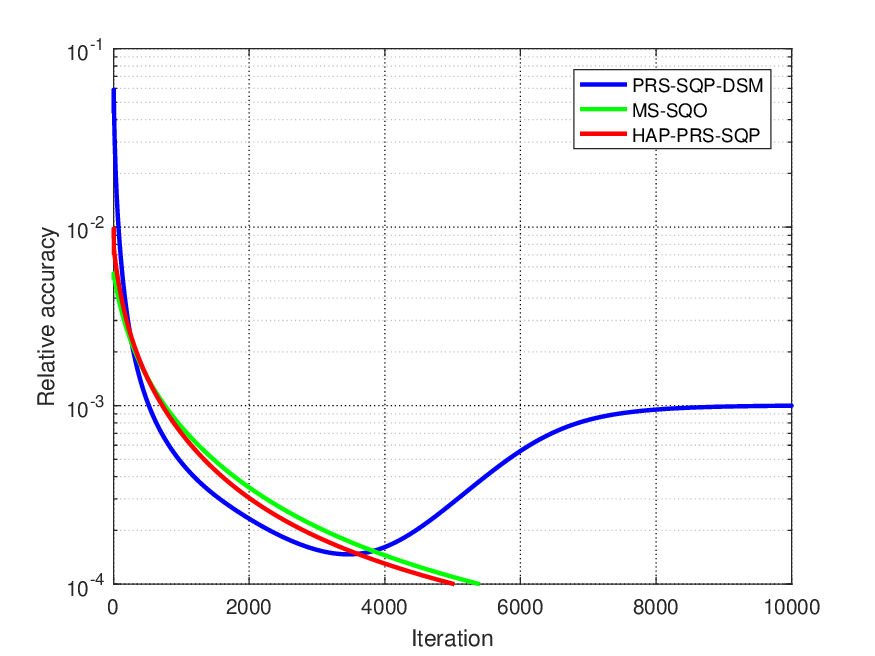}
	\end{subfigure}
	\caption{Changes of the objective function value and the relative accuracy for Problem \eqref{L2reg} with $n=T=500$.}\label{f4.1-2}
\end{figure}
\vspace{-1cm}

\begin{table*}[h!]
	\centering
	\caption{Numerical comparisons of the methods for solving Problem \eqref{L2reg}.}\label{t4.1-2}
	\resizebox{1\textwidth}{!}{
		\begin{tabular*}{\textheight}{@{\extracolsep\fill}c|cccc|cccc|cccc}
			\toprule
			Size& \multicolumn{4}{c}{PRS-SQP-DSM} & \multicolumn{4}{c}{MS-SQO}& \multicolumn{4}{c}{HAP-PRS-SQP} \\
			\cmidrule(r){1-1}\cmidrule(r){2-5}\cmidrule(r){6-9}\cmidrule(r){10-13}
			n=T & Iter & Tcpu & OFV & Fea & Iter & Tcpu & OFV & Fea & Iter & Tcpu & OFV & Fea \\
			\midrule
			100 & 2461 & 7.95 & 0.32287 & 1.68$\times 10^{-1}$&	4389 & 13.82 & 0.40912 & 2.40$\times 10^{-3}$&4002 & 4.76 & 0.38709 & 3.50$\times 10^{-5}$\\
			200 &2456 & 30.93 & 0.43158 & 1.39$\times 10^{-1}$&4252 & 53.39 & 0.50321 & 2.02$\times 10^{-3}$&3771 & 7.64 & 0.48513 & 2.17$\times 10^{-5}$\\
			300 & --- & --- & --- & --- & 4728 & 122.17 & 0.61411 & 1.68$\times 10^{-3}$&4324 & 15.74 & 0.59302 & 2.02$\times 10^{-5}$\\
			500 & --- & --- & --- & --- &5398 & 372.18 & 0.70026 & 1.29$\times 10^{-3}$&5024 & 61.76 & 0.67470 & 1.74$\times 10^{-5}$\\
			1000 & --- & --- & --- & --- & 6020 & 1661.73 & 0.81250 & 9.52$\times 10^{-4}$&5633 & 258.74 & 0.78976 & 1.05$\times 10^{-5}$\\
			\botrule
		\end{tabular*}
	}
\end{table*}

Finally, the comparison results of the three methods are shown in Fig. \ref{f4.1-2} and Table \ref{t4.1-2}, where ``---'' indicates that the corresponding method is not convergent in this test. From these results, HAP-PRS-SQP outperforms the other methods, with faster convergence, fewer iterations, and better solutions. Compared to MS-SQO in the approximate OFV, HAP-PRS-SQP achieves a hundred times smaller feasibility residual ($10^{-3}$ vs. $10^{-5}$) and takes significantly less time. 
\subsection{Smooth LASSO}
The objective function of the least absolute shrinkage and selection operator (LASSO) problem takes the form,
\begin{equation}\label{LASSO}
	\min _{x\in \mathbb{R}^n}\frac{1}{2}\|Ax-d\|^2+\tau\|x\|_1,
\end{equation}
where $A\in \mathbb{R}^{m \times n}~(m\ll n)$, $d \in \mathbb{R}^m$, $\|x\|_1=\sum_{i=1}^n\|x_i\|$ and $\tau>0$ is a regularization parameter.
To address the nonsmoothness of $\|\cdot\|_1$, we employ the following Huber function \cite{Huber1964}, 
$$
\ell_\mu(x_i)=
\begin{cases}
	\frac{1}{2 \mu} x_i^2, & |x_i|<\mu; \\
	|x_i|-\frac{\mu}{2}, & \text {otherwise},
\end{cases}
$$
which can be viewed as a combination of $L_2$ and $L_1$ norms, to define the smooth LASSO problem. Replacing $\|x\|_1$ with $h_\mu(x)=\sum_{i=1}^n \ell_\mu(x_i)$, transforms \eqref{LASSO} into a smooth version:
\begin{equation}\label{HELASSO}
	\min_x~F(x):= \frac{1}{2}\|Ax-d\|^2+\tau h_\mu(x).
\end{equation}

In this experiment, we set $f(x)=\tau h_\mu(x)$ and $g(Ax)=\frac{1}{2}\|Ax-d\|^2$. Since $\nabla f(x)=\tau \nabla h_\mu(x)$ with
\[
(\nabla h_\mu(x))_i=
\begin{cases}
	\frac{x_i}{\mu}, & \|x_i\| \leq \mu,\\
	\text{sign}(x_i), & \text {otherwise},
\end{cases}
\]
and the Huber function is not twice continuously differentiable, we use the method III in \cite[Section 5]{JLY21} to evaluate the matrix. $\nabla f(x)$ is Lipschitz continuous with constant $\frac{\tau}{\mu}$ and the Lipschitz continuity of $\nabla g(Ax)$ is obvious. We generate $A$ and $d$ with
\vspace{-0.2cm}
\[A = \text{randn}(m, n),~u = \text{sprandn}(n, 1, r),~d = A * u.\]
Here $r$ denotes the level of sparsity (the proportion of non-zero elements in $u$ to the total number of elements). We test $m=512$, $n=2048$, $r = 0.5$, $\tau=10^{-3}$ and $\mu=0.1$ for problem \eqref{HELASSO}. For HAP-PRS-SQP, we set $\beta=10$, $\alpha=0.5$ and all other parameters are the same as those given in the first experiment above. 

\vspace{-0.2cm}
\begin{figure}[h]
	\begin{subfigure}{0.5\linewidth}
		\centering
		\includegraphics[height=0.7\textwidth]{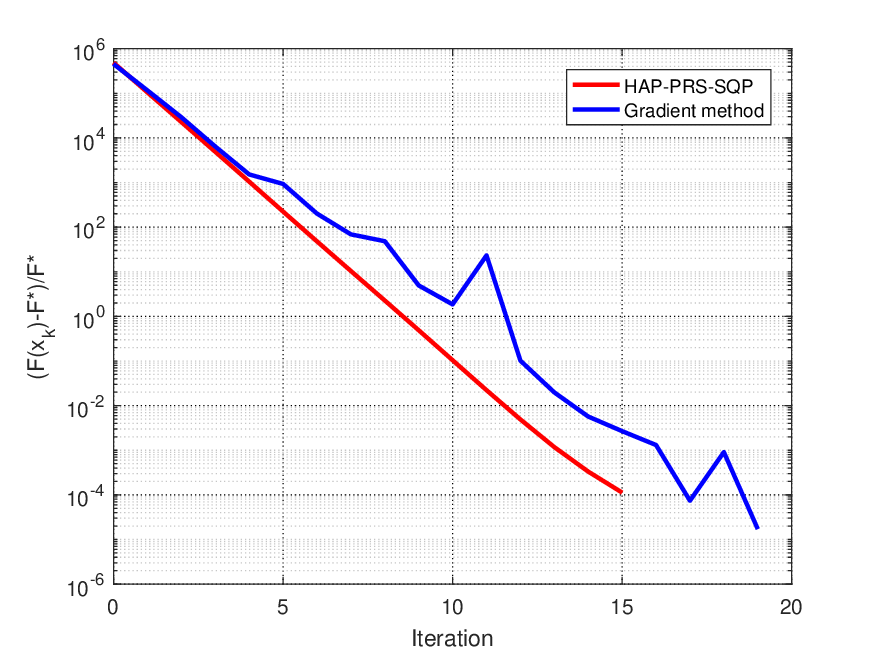}
		\caption{Changes of the relative error}\label{f4.2-1}
	\end{subfigure}
	\begin{subfigure}{0.5\linewidth}
		\centering
		\includegraphics[height=0.7\textwidth]{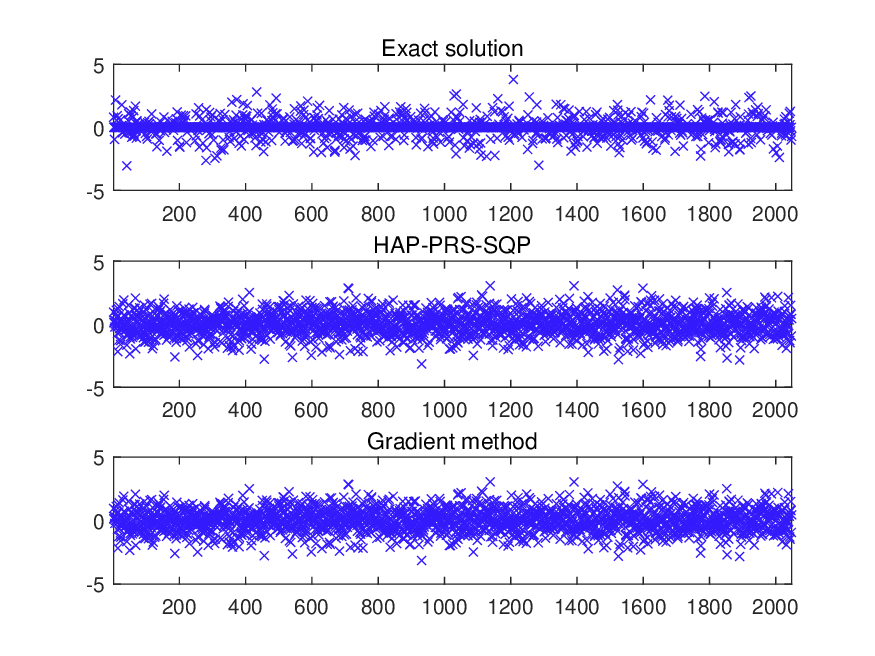}
		\caption{Components of the solution}\label{f4.2-2}
	\end{subfigure}
	\caption{Numerical results on the Huber smooth LASSO problem}\label{f4.2-3}
\end{figure}

Results in Fig. \ref{f4.2-1} show that both HAP-PRS-SQP and the gradient method work well, and that HAP-PRS-SQP has a stable convergence behavior. Fig. \ref{f4.2-2} shows the magnitude of the solution components, most of which are concentrated around zero. This demonstrates that these methods do indeed yield a solution with good sparsity.

\section{Conclusions}\label{sec6}
We have developed a proximal PRS-SQP algorithm with a hybrid acceleration technique, named HAP-PRS-SQP, for solving smooth composite optimization problems. The most original aspect of our study is the flexibility of the dual update, which can be either ascent or descent, significantly broadening its applicability. Our results show that the direction of dual updates does not affect the algorithm's convergence. The hybrid acceleration scheme, though simple, unifies two established strategies: inertial extrapolation and back substitution. Under the assumption that the objective function satisfies the KL property and the parameters satisfy  appropriate conditions, the sequence generated by HAP-PRS-SQP converges to a stationary point. Preliminary numerical results on regularized binary classification problems and smooth LASSO not only illustrate the behavior of different dual updates, but also confirm the effectiveness of the hybrid acceleration steps.


\bmhead{Acknowledgements}
This work was supported by the Natural Science Foundation of China (12261008, 12171106), the Natural Science Foundation of Guangxi Province (2023GXNSFAA026158), the Research Foundation of Guangxi Minzu University (2021KJQD04) and the Guangxi Scholarship Fund of Guangxi Education Department(GED).

\bmhead{Data availability}
Data will be made available on reasonable request.

\bmhead{Conflict of interest}
The authors declare no conflict of interest.

\bibliography{sn-bibliography}

\end{document}